\documentclass[11pt]{article}
\usepackage{epsfig}
\usepackage{amssymb,amsmath,amsthm,amscd}
\usepackage{latexsym}

\pagestyle{plain}
\setlength{\textheight}{8.3in}
\setlength{\textwidth}{6.5in}
\setlength{\evensidemargin}{0 in}
\setlength{\oddsidemargin}{0in}
\setlength{\topmargin}{0.0in}
\setlength{\parskip}{1ex}
\setlength{\parindent}{1em}

\newtheorem{thm}{Theorem}[section]
\newtheorem{prop}[thm]{Proposition}
\newtheorem{cor}[thm]{Corollary}
\newtheorem{lem}[thm]{Lemma}

\theoremstyle{definition}
\newtheorem{defn}[thm]{Definition}

\theoremstyle{remark}


\newcounter{labelflag} \setcounter{labelflag}{0}

\newcommand{\Label}[1]{
                       \ifnum\thelabelflag=1
                          \ifmmode
                             \makebox[0in][l]{\qquad\fbox{\rm#1}}
                          \else
                             \marginpar{\vspace{0.7\baselineskip}
                                        \hspace{-1.1\textwidth}
                                        \fbox{\rm#1}}
                          \fi
                       \fi
                       \label{#1} }

\newcommand{\be}{\begin{equation}}
\newcommand{\ee}{\end{equation}}

\newcommand{\fhn}{ FitzHugh-Nagumo equations  }

 \newcommand{\R}{\mathbb{R}}
  \newcommand{\N}{\mathbb{N}}

 \def  \ltwo {L^2 (\R^n)}

\begin{document}

\begin{titlepage}
\title{\Large\bf  Pullback Attractors for the  Non-autonomous
FitzHugh-Nagumo System  on  Unbounded  Domains}
\vspace{7mm}

\author{
Bixiang Wang  \thanks {Supported in part by NSF  grant DMS-0703521.}
\vspace{5mm}\\
Department of Mathematics, New Mexico Institute of Mining and
Technology \vspace{1mm}\\ Socorro,  NM~87801, USA \vspace{5mm}\\
Email: bwang@nmt.edu \qquad Fax: (1-505) 835 5366}
\date{}
\end{titlepage}

\maketitle

\medskip

\begin{abstract}
The existence of a pullback attractor is established for 
the  singularly  perturbed  FitzHugh-Nagumo  system 
defined on the entire space $\R^n$ when external terms
are unbounded in  a phase space. The pullback asymptotic
compactness of  the system  is proved   
 by using uniform a priori estimates
for far-field values of solutions.
Although the limiting system has no global attractor, we
show that the pullback attractors  for the perturbed
system with bounded external terms  are uniformly bounded, and hence do not blow up
as a small parameter approaches zero.
\end{abstract}

{\bf Key words.}      pullback attractor,      asymptotic compactness,  non-autonomous  equation.

 {\bf MSC 2000.} Primary 35B40. Secondary 35B41, 37L30.

\baselineskip=1.4\baselineskip

\section{Introduction}
\setcounter{equation}{0}

In this paper,  we study the dynamical behavior of the
 non-autonomous  FitzHugh-Nagumo equations defined on $\R^n$:
\be
  \label{intr1}
 \frac {\partial u}{\partial t}
 -  \nu \Delta u + \lambda  u + h(u)   + v   =f(t),
 \ee
 \be
 \label{intr2}
 \frac{\partial v}{\partial t}  -   \epsilon (u- \gamma v) =  \epsilon g(t),
\ee
 where $\nu$, $\lambda$,  $\epsilon$ and $\gamma$ are positive constants,
$f$ and $g$ are  given functions depending on $t$, $h$ is a  
nonlinear
function  satisfying a dissipative condition.

 The \fhn   describe  the signal  transmission across
axons  in
neurobiology,   see e.g.,  \cite{bel, fit, nag} and the references therein.
The long time behavior of the   autonomous  \fhn
   was studied  by several  authors in   \cite{liu, luy, mar, mar2, sha}
and the references therein.
  We here   intend to  investigate the dynamical behavior of the
   {\it non-autonomous }  FitzHugh-Nagumo system.

 Global attractors for non-autonomous  dynamical systems
 have been extensively studied in the literature, see, e.g.,  \cite{ant, 
aul1, car1, car2, car3, car4,
cheb1, cheb2, cheb3, che, har,
lan1, lan2,  lu,  moi, pri2, 
sun1, sun2}. 
Particularly,  when  PDEs
 are defined in bounded domains, such attractors
 have been investigated in 
 \cite{car1, car4, cheb1, cheb2, che, har, lu, sun1, sun2}.
 In the case of unbounded domains,  global attractors
 for non-autonomous  PDEs
 with almost periodic external terms 
 have been examined in \cite{ant,     moi, pri2}.
 Notice that  almost periodic  external  terms   are bounded
in a phase space with respect to time.
 It  seems that
  attractors for {\it non-autonomous 
PDEs defined on unbounded domains with 
  unbounded external terms  }
 are not well understood. As far as we know,  
 in this case, the existence of attractors  was 
 established  only for the  Navier-Stokes equation
   by the authors in  \cite{car2, car3} recently.
 In this paper, we  will prove the existence 
 of attractors for the non-autonomous FitzHugh-Nagumo system
  defined on the entire  space  $\R^n$ with unbounded external terms.

Notice that the domain $\R^n$ for  system   \eqref{intr1}-\eqref{intr2}  is    unbounded, 
and the  unboundedness of $\R^n$ introduces a major  obstacle 
 for examining  the    asymptotic compactness of solutions,   since  Sobolev
 embeddings are not compact in this case.
 The difficulty  caused by non-compactness of embeddings can be overcome
by  the  energy equation approach,  which was  introduced  by Ball in
\cite{bal1, bal2}   and then
 used  by several  authors  for autonomous equations in  
  \cite{ gou1, ju1, moi2,  ros1, wanx}
and for non-autonomous equations in \cite{car2, car3, moi}.
   In this paper, we provide
  uniform estimates on the far field values of solutions to
 circumvent the difficulty caused by the unboundedness of the domain.
   This idea was developed in \cite{wan}
 to prove asymptotic compactness of  solutions  for autonomous 
parabolic  equations on $\R^n$,
and   later  extended to non-autonomous  equations  with almost
periodic external terms in \cite{ant, pri2}.
  The   contribution of this paper is to extend the method of using tail estimates
to the case of   non-autonomous   PDEs defined on unbounded domains
with unbounded external terms.

  We first prove that   system   \eqref{intr1}-\eqref{intr2} on $\R^n$
  has a pullback attractor when the parameter $\epsilon$ is a  small but positive number.
  Note that  the limiting  system  with $\epsilon =0$  
 has no global  attractor  since  $v$ is conserved in this case.
  Based on this fact, one may guess that
 the attractors  of the perturbed  system 
 blow up as $\epsilon \to 0$.
 In this respect, we will  demonstrate  that the limiting behavior of 
 the pullback attractors 
 heavily depends on the behavior of the external terms $f$ and $g$.
 If $f$ or $g$ is unbounded  in  a phase space, then it is very likely
 that  the attractors  blow up as $\epsilon \to 0$.
 However, if both $f$ and $g$ are bounded,     the attractors
 are uniformly bounded in a phase with respect to all small but positive
 $\epsilon$. In other words, in this case,  the pullback attractors do 
  not blow up as $\epsilon \to  0$.

 The paper is organized  as follows. In the next section, we
 recall   fundamental concepts and  results 
for  pullback
   attractors  for non-autonomous  dynamical systems. 
   In Section 3,  we derive uniform estimates of solutions
   for the FitzHugh-Nagumo system for large space and time
   variables. Section 4 is devoted to the proof of  existence of a pullback
   attractor for the system. In the last section, we discuss the limiting
   behavior of pullback attractors when $\epsilon \to 0$. Particularly, we
   will show that  all attractors for the perturbed  system are uniformly
   bounded in $H^1(\R^n) \times H^1(\R^n)$ with respect to $\epsilon$
   when external terms are bounded. 
   
   The following notations will be used throughout the paper.
 We denote by
$\| \cdot \|$ and $(\cdot, \cdot)$ the norm and the inner product
in  $L^2(\R^n)$ and use $\| \cdot\|_{p}$    to denote   the norm  in
$L^{p}(\R^n)$.    Otherwise, the
norm of  a general  Banach space $X$  is written as    $\|\cdot\|_{X}$.
 The letters $C$ and $C_i$ ($i=1, 2, \ldots$)
are  generic positive constants  which may change their  values from line to
line or even in the same line.

\section{Preliminaries}
\setcounter{equation}{0}

In this section,  we recall some basic concepts
related to pullback attractors    for non-autonomous  dynamical
systems.  It is  worth to notice that these concepts
are quite similar to that of random attractors for stochastic
systems.  We refer the reader to \cite{arn1, bat1, car22,  car2, car3,  cheb1, chu,   fla1, sun1}
for more details.

Let $\Omega$  be   a nonempty set and  $X$   a metric space with  distance
$d(\cdot, \cdot)$.

\begin{defn}
A   family of mappings $\{\theta_t\}_{t\in \R}$
from $\Omega$ to itself is called a  family of shift operators on $\Omega$ if 
 $\{\theta_t\}_{t\in \R}$ satisfies the  group properties:
 
   (i) \  $\theta_0 \omega =\omega,  \quad  \forall \ \omega \in \Omega;$ 
   
   (ii)\ $  \theta_t (\theta_\tau \omega) = \theta_{t+\tau}  \omega,  \quad
  \forall \ \omega \in \Omega  \quad \mbox{and} \ \  t, \ \tau \in \R.$ 
  \end{defn}

\begin{defn}
Let $\{\theta_t\}_{t\in \R}$
be a   family of shift operators on $\Omega$.  Then a  continuous $\theta$-cocycle
$\phi$ on $X$  
is  a mapping
$$
\phi: \R^+ \times \Omega \times X \to X, \quad (t, \omega, x) \mapsto \phi(t, \omega, x),
$$
which  satisfies, for  all  $\omega \in \Omega$ and
$t,  \tau \in \R^+$,

(i) \  $\phi(0, \omega, \cdot) $ is the identity on $X$;

(ii) \  $\phi(t+\tau, \omega, \cdot) = \phi(t, \theta_\tau \omega, \cdot) \circ \phi(\tau, \omega, \cdot)$;

(iii) \  $\phi(t, \omega, \cdot): X \to  X$ is continuous.
\end{defn}

Hereafter, we always assume that 
$\phi$ is a continuous $\theta$-cocycle on $X$, and   $\mathcal{D}$ a  collection  of families of subsets of $X$:
$$
{\mathcal{D}} = \{ D =\{D(\omega)\}_{\omega \in \Omega}: \ D(\omega) \subseteq X 
\  \mbox{for every} \ \omega \in \Omega \}.
$$

\begin{defn}
Let $\mathcal{D}$ be a collection of families of  subsets of $X$.
Then  $\mathcal{D}$ is called inclusion-closed if 
   $D=\{D(\omega)\}_{\omega \in \Omega} \in {\mathcal{D}}$
and  $\tilde{D}=\{\tilde{D}(\omega) \subseteq X:  \omega \in \Omega\} $
with
  $\tilde{D}(\omega) \subseteq D(\omega)$ for all $\omega \in \Omega$ imply
  that  $\tilde{D} \in {\mathcal{D}}$.
  \end{defn}

\begin{defn}
Let $\mathcal{D}$ be a collection of families of  subsets of $X$ and
$\{K(\omega)\}_{\omega \in \Omega} \in \mathcal{D}$. Then
$\{K(\omega)\}_{\omega \in \Omega} $ is called a  pullback
 absorbing
set for   $\phi$ in $\mathcal{D}$ if for every $B \in \mathcal{D}$
and   $\omega \in \Omega$, there exists $t(\omega, B)>0$ such
that
$$
\phi(t, \theta_{-t} \omega, B(\theta_{-t} \omega)) \subseteq K(\omega)
\quad \mbox{for all} \ t \ge t(\omega, B).
$$
\end{defn}

\begin{defn}
 Let $\mathcal{D}$ be a collection of families of  subsets of $X$.
 Then
$\phi$ is said to be  $\mathcal{D}$-pullback asymptotically
compact in $X$ if  for  every  $\omega \in \Omega$,
$\{\phi(t_n, \theta_{-t_n} \omega,
x_n)\}_{n=1}^\infty$ has a convergent  subsequence  in $X$
whenever
  $t_n \to \infty$, and $ x_n\in   B(\theta_{-t_n}\omega)$   with
$\{B(\omega)\}_{\omega \in \Omega} \in \mathcal{D}$.
\end{defn}

\begin{defn}
 Let $\mathcal{D}$ be a collection of families of  subsets of $X$
 and
 $\{\mathcal{A}(\omega)\}_{\omega \in \Omega} \in {\mathcal{D}}$.
Then     $\{\mathcal{A}(\omega)\}_{\omega \in \Omega}$  
is called a    $\mathcal{D}$-pullback global  attractor  for
  $\phi$
if the following  conditions are satisfied,  for every  $\omega \in \Omega$,

(i) \  $\mathcal{A}(\omega)$ is compact;

(ii) \ $\{\mathcal{A}(\omega)\}_{\omega \in \Omega}$ is invariant, that is,
$$ \phi(t, \omega, \mathcal{A}(\omega)  )
= \mathcal{A}(\theta_t \omega), \ \  \forall \   t \ge 0;
$$

(iii) \ \ $\{\mathcal{A}(\omega)\}_{\omega \in \Omega}$
attracts  every  set  in $\mathcal{D}$,  that is, for every
 $B = \{B(\omega)\}_{\omega \in \Omega} \in \mathcal{D}$,
$$ \lim_{t \to  \infty} d (\phi(t, \theta_{-t}\omega, B(\theta_{-t}\omega)), \mathcal{A}(\omega))=0,
$$
where $d$ is the Hausdorff semi-metric given by
$d(Y,Z) =
  \sup_{y \in Y }
\inf_{z\in  Z}  \| y-z\|_{X}
 $ for any $Y\subseteq X$ and $Z \subseteq X$.
\end{defn}

The following existence result  of  a    pullback global  attractor
for a  continuous cocycle
can be found in \cite{arn1, bat1, car22,  car2, car3,  cheb1, chu,   fla1}.

\begin{prop}
\label{att} Let $\mathcal{D}$ be an  inclusion-closed  collection of families of   subsets of
$X$ and $\phi$ a continuous $\theta$-cocycle on $X$.
 Suppose  that $\{K(\omega)\}_{\omega
\in \Omega} \in {\mathcal{D}} $ is a   closed   absorbing set  for  $\phi$  in
$\mathcal{D}$ and $\phi$ is $\mathcal{D}$-pullback asymptotically
compact in $X$. Then $\phi$ has a unique $\mathcal{D}$-pullback global
attractor $\{\mathcal{A}(\omega)\}_{\omega \in \Omega} \in {\mathcal{D}}$ which is
given by
$$\mathcal{A}(\omega) =  \bigcap_{\tau \ge 0} \  \overline{ \bigcup_{t \ge \tau} \phi(t, \theta_{-t} \omega, K(\theta_{-t} \omega)) }.
$$
\end{prop}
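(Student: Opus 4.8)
The plan is to follow the classical $\omega$-limit set construction adapted to the cocycle setting. For a family $B=\{B(\omega)\}_{\omega\in\Omega}\in\mathcal{D}$ I would first introduce the pullback $\omega$-limit set
$$
\Omega(B,\omega)=\bigcap_{\tau\ge 0}\ \overline{\bigcup_{t\ge\tau}\phi(t,\theta_{-t}\omega,B(\theta_{-t}\omega))},
$$
so that the candidate attractor is precisely $\mathcal{A}(\omega)=\Omega(K,\omega)$. The first step is the sequential characterization: $y\in\Omega(B,\omega)$ if and only if there exist $t_n\to\infty$ and $x_n\in B(\theta_{-t_n}\omega)$ with $\phi(t_n,\theta_{-t_n}\omega,x_n)\to y$. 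This is immediate from the definition and reduces every later step to manipulating such sequences together with the cocycle identity (ii) and the continuity (iii).

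Next I would prove that $\mathcal{A}(\omega)$ is nonempty and compact. Nonemptiness follows by picking any $t_n\to\infty$ and $x_n\in K(\theta_{-t_n}\omega)$ and applying $\mathcal{D}$-pullback asymptotic compactness to extract a convergent subsequence, whose limit lies in $\Omega(K,\omega)$ by the characterization. Closedness is automatic, since $\mathcal{A}(\omega)$ is an intersection of closed sets. For precompactness I would take a sequence $y_k\in\mathcal{A}(\omega)$, use the characterization to choose $t_k\to\infty$ and $x_k\in K(\theta_{-t_k}\omega)$ with $d(\phi(t_k,\theta_{-t_k}\omega,x_k),y_k)<1/k$ by a diagonal selection, and again invoke asymptotic compactness; the resulting limit lies in $\mathcal{A}(\omega)$ and is simultaneously the limit of a subsequence of $y_k$.

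The attraction property (iii) is the crux. Arguing by contradiction, I would suppose that for some $B\in\mathcal{D}$ one has $d(\phi(t_n,\theta_{-t_n}\omega,x_n),\mathcal{A}(\omega))\ge\delta>0$ for some $t_n\to\infty$ and $x_n\in B(\theta_{-t_n}\omega)$; asymptotic compactness produces a subsequence with $\phi(t_n,\theta_{-t_n}\omega,x_n)\to y$, and it suffices to show $y\in\mathcal{A}(\omega)=\Omega(K,\omega)$. Here lies the delicate point, which I expect to be the main obstacle: the absorbing time in the definition depends on the base point, so one cannot directly push $x_n$ into $K$ over a moving fiber $\theta_{-\tau_n}\omega$. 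The remedy is to fix $\tau\ge 0$ first and split $\phi(t_n,\theta_{-t_n}\omega,x_n)=\phi(\tau,\theta_{-\tau}\omega,w_n)$ with $w_n=\phi(t_n-\tau,\theta_{-(t_n-\tau)}(\theta_{-\tau}\omega),x_n)$ via the cocycle identity; because $\tau$ is now fixed, the absorbing property at the fixed base point $\theta_{-\tau}\omega$ forces $w_n\in K(\theta_{-\tau}\omega)$ for all large $n$, whence $y\in\overline{\bigcup_{t\ge\tau}\phi(t,\theta_{-t}\omega,K(\theta_{-t}\omega))}$. Since $\tau\ge 0$ was arbitrary, $y\in\Omega(K,\omega)$, yielding the contradiction; the same computation shows $\Omega(B,\omega)\subseteq\mathcal{A}(\omega)$ for every $B\in\mathcal{D}$.

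Finally I would establish invariance, membership in $\mathcal{D}$, and uniqueness. For $\phi(t,\omega,\mathcal{A}(\omega))\subseteq\mathcal{A}(\theta_t\omega)$ I would pass to the limit in $\phi(t,\omega,\phi(t_n,\theta_{-t_n}\omega,x_n))=\phi(t+t_n,\theta_{-(t+t_n)}(\theta_t\omega),x_n)$ using continuity (iii); for the reverse inclusion $\mathcal{A}(\theta_t\omega)\subseteq\phi(t,\omega,\mathcal{A}(\omega))$ I would rewrite a trajectory ending over $\theta_t\omega$ as $\phi(t,\omega,\eta_n)$, extract $\eta_n\to\eta\in\mathcal{A}(\omega)$ by asymptotic compactness, and apply continuity — this reverse inclusion is again precisely where asymptotic compactness is essential. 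Since the absorbing property applied to $K$ itself gives $\mathcal{A}(\omega)\subseteq K(\omega)$ while $K\in\mathcal{D}$ and $\mathcal{D}$ is inclusion-closed, we conclude $\mathcal{A}=\{\mathcal{A}(\omega)\}_{\omega\in\Omega}\in\mathcal{D}$. Uniqueness then follows in the standard way: any two pullback attractors in $\mathcal{D}$ attract each other and are invariant, so combining attraction with compact invariance forces them to coincide fiberwise.
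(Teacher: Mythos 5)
Your proof is correct: the sequential characterization of the pullback omega-limit set $\Omega(K,\omega)$, the fixed-$\tau$ splitting $\phi(t_n,\theta_{-t_n}\omega,x_n)=\phi(\tau,\theta_{-\tau}\omega,w_n)$ via the cocycle identity (which correctly resolves the delicate point that the absorbing time depends on the base fiber), and the use of $\mathcal{D}$-pullback asymptotic compactness for nonemptiness, compactness, attraction, and the reverse invariance inclusion are all sound, with membership of $\{\mathcal{A}(\omega)\}$ in $\mathcal{D}$ following from $\mathcal{A}(\omega)\subseteq K(\omega)$ (using closedness of $K(\omega)$) and inclusion-closedness. Note that the paper itself gives no proof of this proposition --- it is quoted from the cited literature (Caraballo--Lukaszewicz--Real and related references) --- and your argument is essentially the standard proof found in those sources.
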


\section{Cocycle associated  with the  FitzHugh-Nagumo system}
\setcounter{equation}{0}

In this  section,  we   construct a $\theta$-cocycle $\phi$
for the  
 non-autonomous  FitzHugh-Nagumo system  defined on $\R^n$:
  for every $\tau \in\R$ and $t > \tau$,
\be
  \label{41}
 \frac {\partial u}{\partial t}
 -  \nu \Delta u + \lambda  u + h(u)   + v   =f(t),
 \ee
 \be
 \label{42}
 \frac{\partial v}{\partial t}  -   \epsilon (u- \gamma v) =  \epsilon g(t),
\ee
 with the initial data
 \be\label{43}
 u(x, \tau) = u_\tau (x), \quad v(x, \tau) = v_\tau (x),  \quad x\in \R^n,
 \ee
where $\nu$, $\lambda$,  $\epsilon$ and $\gamma$ are positive constants,
$f \in L^2_{loc}(\R, \ltwo)$, $g \in L^2(\R, H^1(\R^n) )$, 
 and  $h$ is a smooth
nonlinear
function  that satisfies, for some positive constant $C$,
\be
\label{44}
h(s)s \ge  0, \ \  h(0)=0, \ \ h^{\prime} (s) \ge -C,
 \ \  s \in \R,
\ee
 and
\be
\label{45}
| h^{\prime} (s) | \le C(1+ |s|^r), \ \ s \in \R,
\ee
with $r \ge 0$ for $n \le 2$   and $r\le \min({\frac 4n},
{\frac 2{n-2}})$  for $ n \ge 3$.

By a  standard method,  it can be proved  that 
 if $f \in L^2_{loc} (\R, \ltwo )$, $g \in L^2_{loc} (\R, H^1(\R^n))$
 and 
\eqref{44}-\eqref{45} 
 hold true,  then problem \eqref{41}-\eqref{43}   is well-posed
in  $\ltwo \times  \ltwo$, that  is, for   every  $\tau \in \R$
 and   $ (u_\tau, v_\tau) \in  \ltwo \times  \ltwo
$,  there exists a unique  solution $ (u, v) \in C( [\tau, \infty),  \ltwo
\times  \ltwo) $. Further, the solution is continuous with respect
to initial data  $ (u_\tau, v_\tau)$ in  $ \ltwo \times  \ltwo$.
To construct a cocycle  $\phi$ for problem  \eqref{41}-\eqref{43}, we 
denote by
 $\Omega =\R$,  and   define a shift operator $\theta_t$ 
on $\Omega$
 for every 
$t \in \R$
 by
$$
\theta_t  (\tau ) = t+ \tau, \quad \mbox{for all} \ \ \tau \in \R.
$$
Let $\phi$ be a mapping  from $\R^+ \times \Omega \times  ( \ltwo \times \ltwo)$
to $\ltwo \times \ltwo$ given by
$$
\phi(t, \tau, (u_\tau, v_\tau) ) = 
(u(t+\tau, \tau, u_\tau), v(t+\tau, \tau, v_\tau) ),
$$
 where  $ t \ge 0$,  $  \tau \in \R $, 
$(u_\tau , v_\tau) \in \ltwo \times \ltwo$,  and 
$(u,v)$ is the solution of problem \eqref{41}-\eqref{43}.
By the uniqueness of solutions, we find that
for every $t, s \ge 0$, $\tau \in \R$ and 
$ (u_\tau, v_\tau)  \in \ltwo \times \ltwo$,
$$
\phi (t+s, \tau,  (u_\tau, v_\tau) ) =
\phi (t, s+ \tau,  (\phi(s, \tau, (u_\tau, v_\tau) ) ) ).
$$
Then we see that $\phi$ is a continuous 
$\theta$-cocycle  on $\ltwo \times \ltwo$.
In the next two sections, we will investigate
the existence of a pullback attractor
for $\phi$.  To this end,  we need to define
an appropriate collection of families of subsets
of $\ltwo \times \ltwo$.

 For convenience, 
 if $E \subseteq \ltwo \times \ltwo$,  we denote
   by
   $$ \| E \| = \sup\limits_{x \in E}
   \| x\|_{\ltwo \times \ltwo}.
   $$
 Let
   $D =\{ D(t) \}_{t\in \R}$  be  a family of
   subsets of $\ltwo \times \ltwo$, i.e.,
   $D(t) \subseteq \ltwo \times \ltwo$
   for every $t \in \R$.  In this paper, we are  
   interested in a  family
 $D =\{ D(t) \}_{t\in \R}$ satisfying
 \be
 \label{basin_cond}
 \lim_{t \to  - \infty} e^{    \sigma  t} \| D( t) \|^2 =0,
 \ee
 where $\sigma$ is a positive number given by
 \be
 \label{sigma}
 \sigma = {\frac 12} \epsilon \gamma.
 \ee
 We write the   collection of all
 families satisfying \eqref{basin_cond}
 as  ${\mathcal{D}}_\sigma$, that is,
 \be
 \label{D_sigma}
 {{\mathcal{D}}_\sigma = \{  D =\{ D(t) \}_{t\in \R}:
 D  \ \mbox{satisfies} \  \eqref{basin_cond} \} }.
\ee
Since $\epsilon$ is small in practice, 
we  assume
throughout this paper  that
\be\label{epsilon0}
\epsilon \le \epsilon_0  \quad \mbox{where} \quad   \epsilon_0 =\min\{1, {\frac \lambda\gamma}\}.
\ee
 As we will see later, when we derive uniform estimates of solutions, we need the
 following conditions for the external terms: 
 \be
 \label{fcond}
 \int_{-\infty}^\tau e^{\sigma \xi} \| f(\xi)\|^2 d \xi
<  \infty, \quad \forall \ \tau \in \R,
 \ee
 and
  \be
 \label{gcond}
 \int_{-\infty}^\tau e^{\sigma \xi} \| g(\xi)\|^2_{H^1} d \xi
 <  \infty, \quad \forall \  \tau  \in \R.
 \ee
 In addition,   the following  asymptotically null conditions
are required for proving the asymptotic compactness of solutions: 
 \be
 \label{finfinity}
 \lim_{k  \to \infty} \int_{-\infty}^\tau  \int_{|x| \ge k}  e^{\sigma \xi}   |f(x, \xi) |^2 dx d\xi =0 ,
 \quad \forall \  \tau  \in \R,
 \ee
 and
 \be
 \label{ginfinity}
\lim_{k \to \infty}  \int_{-\infty}^\tau  \int_{|x| \ge k}  e^{\sigma \xi}   |g(x, \xi) |^2 dx d\xi =0,
\quad \forall \  \tau  \in \R.
 \ee
 
 Notice that conditions \eqref{fcond}-\eqref{ginfinity}  
do not require that  $f$ and $g$ be bounded in $L^2(\R^n)$ 
when $t \to \pm \infty$.  Particularly,
These assumptions have no any restriction
 on $f$ and $g$ when $t \to +\infty$.   As a typical example,
 for any   $f_1 \in L^2(\R^n)$ and $g_1 \in H^1(\R^n)$, the functions 
$f(x,t)  =e^{{\frac 14} \sigma |t|}
 f_1 (x)$  and $g(x,t)  =e^{{\frac 14} \sigma |t|}
 g_1 (x)$  satisfy all conditions
 \eqref{fcond}-\eqref{ginfinity}.
 In this case, $f$ and $g$ are indeed unbounded 
 in $L^2(\R^n)$ as $t \to \pm\infty$.
 
 It is  useful to note that  conditions \eqref{finfinity}-\eqref{ginfinity}  
 imply 
 for every $\tau \in \R$ and $\eta>0$, there is $K=K(\tau, \eta)>0$
 such that
\be
\label{finfinity2}
 \int_{-\infty}^\tau  \int_{|x| \ge K}  e^{\sigma \xi}   |f(x, \xi) |^2 dx d\xi \le \eta e^{\sigma \tau} ,
\ee
 and
\be
\label{ginfinity2}
 \int_{-\infty}^\tau  \int_{|x| \ge K}  e^{\sigma \xi}   |g(x, \xi) |^2 dx d\xi \le \eta e^{\sigma \tau} .
\ee
We remark that \eqref{finfinity2} and \eqref{ginfinity2} will play
a crucial role when we derive uniform estimates on the tails of solutions
in the next  section.

\section{Uniform     estimates of solutions }
\setcounter{equation}{0}

      In this section, we
 derive uniform estimates of  solutions  of  problem \eqref{41}-\eqref{43}  defined on $\R^n$
when $t \to \infty$. These estimates are necessary  for   proving  the existence of
a bounded pullback  absorbing set
 and the  pullback asymptotic compactness of the $\theta$-cocycle $\phi$
   associated  with the system.
  In particular, we
  will  show that   the tails of the solutions, i.e., solutions evaluated at large values of $|x|$, are uniformly small
 when time is sufficiently  large.
 
 We  start with the estimates in $\ltwo \times \ltwo$.

\begin{lem}
\label{lem42}
 Suppose  \eqref{44}-\eqref{45}  and \eqref{fcond}-\eqref{gcond} hold.
Then for every $\tau \in \R$ and $D=\{D(t)\}_{t\in \R} \in {\mathcal{D}}_\sigma$,
 there exists  $T=T(\tau, D)>0$ such that for all $t \ge T$,
$$
\| u(\tau, \tau -t, u_0(\tau -t) ) \|^2 + \| v(\tau, \tau -t, v_0(\tau -t) ) \| ^2 \le M  e^{- \sigma  \tau}
\int_{-\infty}^\tau
e^{\sigma \xi}  \left ( \| f(\xi )\|^2
 +   \| g(\xi )\|^2 \right )  d \xi ,
$$
and
 $$
\int_{\tau -t}^{\tau}  e^{\sigma  \xi } \|   u (\xi, \tau -t, u_0(\tau -t) )  \|^2_{H^1} d\xi
  \le M 
    \int_{-\infty}^\tau
e^{\sigma \xi}  \left ( \| f(\xi )\|^2
 +   \| g(\xi )\|^2 \right )  d \xi
 ,
 $$
where $M$ is a  positive constant depending   on the data     $(
\nu,  \lambda,  \epsilon, \gamma)$.
\end{lem}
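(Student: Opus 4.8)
\emph{The plan is to run a single weighted energy estimate on the coupled system, with weights chosen so that the coupling terms cancel exactly and the linear dissipation produces precisely the rate $\sigma=\frac12\epsilon\gamma$.} All computations below are formal but are justified in the usual way via the Galerkin scheme and a density argument, since \eqref{41}-\eqref{43} is well-posed in $\ltwo\times\ltwo$ by the well-posedness statement preceding the lemma. Throughout I abbreviate $u(\xi)=u(\xi,\tau-t,u_0(\tau-t))$, $v(\xi)=v(\xi,\tau-t,v_0(\tau-t))$ and set the energy $y(\xi)=\|u(\xi)\|^2+\frac1\epsilon\|v(\xi)\|^2$.

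First I would take the inner product of \eqref{41} with $u$ and of \eqref{42} with $v$ in $\ltwo$, dividing the second by $\epsilon$, to get
$$\tfrac12\tfrac{d}{d\xi}\|u\|^2+\nu\|\nabla u\|^2+\lambda\|u\|^2+(h(u),u)+(v,u)=(f,u),$$
$$\tfrac1{2\epsilon}\tfrac{d}{d\xi}\|v\|^2-(u,v)+\gamma\|v\|^2=(g,v).$$
The crucial point is that summing these two identities cancels the coupling terms $(v,u)$ and $-(u,v)$; moreover $(h(u),u)\ge0$ may be discarded by the sign condition in \eqref{44}. Estimating the right-hand side by Young's inequality, $(f,u)\le\frac\lambda2\|u\|^2+\frac1{2\lambda}\|f\|^2$ and $(g,v)\le\frac\gamma2\|v\|^2+\frac1{2\gamma}\|g\|^2$, and then multiplying by $2$, I arrive at
$$\tfrac{d}{d\xi}y+2\nu\|\nabla u\|^2+\lambda\|u\|^2+\gamma\|v\|^2\le\tfrac1\lambda\|f\|^2+\tfrac1\gamma\|g\|^2.$$
Here the parameter constraint \eqref{epsilon0} does the essential bookkeeping: since $\epsilon\le\lambda/\gamma$ we have $\sigma\le\frac\lambda2$, so $\lambda\|u\|^2\ge\sigma\|u\|^2+\frac\lambda2\|u\|^2$, while $\sigma/\epsilon=\gamma/2$ gives $\gamma\|v\|^2\ge\sigma\frac1\epsilon\|v\|^2$. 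These yield the working inequality
$$\tfrac{d}{d\xi}y+2\nu\|\nabla u\|^2+\tfrac\lambda2\|u\|^2+\sigma y\le\tfrac1\lambda\|f\|^2+\tfrac1\gamma\|g\|^2.$$

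To finish, I would multiply by $e^{\sigma\xi}$, write the $\sigma y$ term together with $\frac{d}{d\xi}y$ as $\frac{d}{d\xi}(e^{\sigma\xi}y)$, and integrate over $[\tau-t,\tau]$, obtaining
$$e^{\sigma\tau}y(\tau)+\int_{\tau-t}^\tau e^{\sigma\xi}\!\left(2\nu\|\nabla u\|^2+\tfrac\lambda2\|u\|^2\right)d\xi\le e^{\sigma(\tau-t)}y(\tau-t)+\int_{\tau-t}^\tau e^{\sigma\xi}\!\left(\tfrac1\lambda\|f\|^2+\tfrac1\gamma\|g\|^2\right)d\xi.$$
Since $\epsilon\le1$, the initial term satisfies $e^{\sigma(\tau-t)}y(\tau-t)\le\frac1\epsilon\,e^{\sigma(\tau-t)}\|D(\tau-t)\|^2$, which tends to $0$ as $t\to\infty$ by the defining property \eqref{basin_cond} of $D\in\mathcal D_\sigma$; hence there is $T=T(\tau,D)$ so that for $t\ge T$ this term is dominated by the force integral. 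Dividing by $e^{\sigma\tau}$ and using $\|u(\tau)\|^2+\|v(\tau)\|^2\le y(\tau)$ gives the first estimate, while dividing the integral terms by $\min\{2\nu,\frac\lambda2\}$ and noting $\|u\|_{H^1}^2=\|u\|^2+\|\nabla u\|^2$ gives the second; the finiteness of the right-hand side is exactly \eqref{fcond}-\eqref{gcond}, and the constant $M$ collects $\frac1\lambda,\frac1\gamma,\frac1{\min\{2\nu,\lambda/2\}}$, depending only on $(\nu,\lambda,\epsilon,\gamma)$. \emph{The step I expect to require the most care is not the energy inequality itself but pinning down the decay rate}: one must verify that the weight $\sigma=\frac12\epsilon\gamma$ is simultaneously absorbed by the $v$-dissipation (through $\sigma/\epsilon=\gamma/2$) and the $u$-dissipation (through $\sigma\le\lambda/2$, which is precisely why \eqref{epsilon0} is imposed), and then that the pullback limit $t\to\infty$ annihilates the initial-data contribution via membership in $\mathcal D_\sigma$.
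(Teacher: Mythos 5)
Your proof is correct and is essentially the paper's own argument: the paper likewise adds the two energy identities (using the multiplier $\epsilon u$ rather than your $\epsilon^{-1}$-rescaled energy, which is the same thing up to a factor of $\epsilon$) so the coupling terms cancel, discards $(h(u),u)\ge 0$, applies the same Young estimates, absorbs the dissipation into the rate $\sigma=\tfrac12\epsilon\gamma$ via \eqref{epsilon0}, multiplies by $e^{\sigma t}$, integrates over $(\tau-t,\tau)$, and kills the initial-data term for large $t$ using membership in $\mathcal{D}_\sigma$. Your only deviations are cosmetic: retaining $\tfrac{\lambda}{2}\|u\|^2$ explicitly in the dissipation (which even makes your constant $M$ independent of $\epsilon$, a slight sharpening the lemma does not require).
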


 \begin{proof}
Taking the inner product of  \eqref{41}
 with $ \epsilon u$ in $\ltwo$, we find that
\be
\label{p42_1}
{\frac 12} \epsilon {\frac d{dt}} \| u \|^2
 +  \epsilon   \nu \| \nabla u \|^2
 +  \epsilon \lambda \| u \|^2 + \epsilon  \int  h(u)u
 + \epsilon \int u v  = \epsilon \ \int  f(t) u.
\ee
Taking the inner product of  \eqref{42}
 with $  v$ in $\ltwo$, we find
\be
\label{p42_2}
{\frac 12}   {\frac {d}{dt}} \| v \|^2
 + \epsilon  \gamma  \| v \|^2 - \epsilon \int  uv
 = \epsilon \int  g(t) v.
\ee
It follows from \eqref{p42_1}-\eqref{p42_2} that
\be
\label{p42_3}
{\frac 12} {\frac d{dt}} \left ( \epsilon  \| u \|^2
  +    \| v \|^2 \right )
   +  \epsilon  \nu \| \nabla u \|^2
 +  \epsilon  \lambda \| u \|^2
 +   \epsilon \gamma  \| v \|^2
 + \epsilon    \int  h(u)u
 =   \epsilon   \int  f(t)  u +   \epsilon \int  g(t) v.
 \ee
  Note that the terms on the  right-hand side of  \eqref{p42_3}
   are  bounded  by
\be
\label{p42_4}
|\epsilon \int f(t) u | \le  \epsilon  \| f (t) \| \| u \|
  \le {\frac 12}  \epsilon \lambda \| u \|^2
  + {\frac {\epsilon}{2 \lambda}} \| f \|^2 ,
 \ee
and
\be
\label{p42_5}
|\epsilon \int g(t) v | \le  \epsilon   \| g(t) \| \| v \|
  \le {\frac 12} \epsilon \gamma  \| v \|^2
  + {\frac {\epsilon}{2 \gamma}} \| g \|^2 .
 \ee
 By  \eqref{p42_3}-\eqref{p42_5} and \eqref{44},  we  obtain
$$
{\frac d{dt}} \left ( \epsilon \| u \|^2
  +   \| v \|^2 \right )
   +  2\epsilon  \nu \| \nabla u \|^2
 +  \epsilon \lambda \| u \|^2
 + \epsilon \gamma  \| v \|^2
  \le {\frac {\epsilon}{\lambda}} \| f \|^2
  + {\frac {\epsilon}{\gamma}} \| g \|^2 ,
$$
and hence by \eqref{sigma} and \eqref{epsilon0} we have
\be
\label{p42_8}
{\frac d{dt}} \left ( \epsilon \| u \|^2
  +   \| v \|^2 \right )
   +  2 \sigma \left (   \epsilon \| u \|^2  +   \| v \|^2
   \right )
   +  2\epsilon  \nu \| \nabla u \|^2
  \le {\frac {\epsilon}{\lambda}} \| f \|^2
  + {\frac {\epsilon}{\gamma}} \| g \|^2 .
  \ee
 Multiplying  \eqref{p42_8} by $e^{\sigma t}$ and
then  integrating  between $\tau -t $ and $\tau$ with $t \ge 0$,  we get,
 $$
\epsilon  \|  u(\tau, \tau -t, u_0(\tau-t)  ) \|^2 +   \| v(\tau, \tau -t, v_0(\tau -t)) \|^2
$$
$$
+  \sigma  
\int_{\tau -t}^\tau e^{\sigma (\xi -\tau)}
 \left (
\|   u(\xi, \tau -t, u_0(\tau -t) )\|^2 +  \|   v (\xi, \tau -t, v_0(\tau -t) )\|^2
\right )
 d \xi
$$
$$
+ 2 \epsilon \nu
\int_{\tau -t}^\tau e^{\sigma (\xi -\tau)}
\| \nabla u(\xi, \tau -t, u_0(\tau -t) )\|^2 d \xi
$$
$$
\le e^{- \sigma \tau }e^{ \sigma (\tau-t) } \left ( \epsilon \| u_0(\tau -t ) \|^2
  +   \| v_0(\tau -t) \|^2 \right )
  $$
  $$
  + {\frac {\epsilon}{   \lambda}} e^{- \sigma \tau} \int_{\tau -t}^\tau
e^{\sigma \xi} \| f(\xi )\|^2 d \xi
 + {\frac {\epsilon}{   \gamma }} e^{- \sigma \tau} \int_{\tau -t}^\tau
e^{\sigma \xi} \| g(\xi )\|^2 d \xi
$$
$$
\le e^{- \sigma \tau }e^{ \sigma (\tau-t) } \left ( \epsilon \| u_0(\tau -t ) \|^2
  +   \| v_0(\tau -t) \|^2 \right )
  $$
   \be
 \label{p42_9}
  + {\frac {\epsilon}{   \lambda}} e^{- \sigma \tau} \int_{-\infty}^\tau
e^{\sigma \xi} \| f(\xi )\|^2 d \xi
 + {\frac {\epsilon}{   \gamma }} e^{- \sigma \tau} \int_{-\infty}^\tau
e^{\sigma \xi} \| g(\xi )\|^2 d \xi   .
\ee
Notice that $(u_0(\tau -t), v_0(\tau -t) ) \in D(\tau -t)$
and $D= \{D(t)\}_{t \in \R} \in {\mathcal{D}}_\sigma$. We find that
for every $\tau \in \R$, there exists $T=T(\tau, D)$ such that
for all $t \ge T$,
$$
e^{ \sigma (\tau-t) } \left ( \epsilon \| u_0(\tau -t ) \|^2
  +   \| v_0(\tau -t) \|^2 \right )
  \le
   {\frac {\epsilon}{   \lambda}}   \int_{-\infty}^\tau
e^{\sigma \xi} \| f(\xi )\|^2 d \xi
 + {\frac {\epsilon}{   \gamma }}   \int_{-\infty}^\tau
e^{\sigma \xi} \| g(\xi )\|^2 d \xi  ,
$$
which along with  \eqref{p42_9}
shows that, for all $t \ge T$,
$$
\epsilon  \|  u(\tau, \tau -t, u_0(\tau-t)  ) \|^2 +   \| v(\tau, \tau -t, v_0(\tau -t)) \|^2
$$
$$
+  \sigma  
\int_{\tau -t}^\tau e^{\sigma (\xi -\tau)}
 \left (
\|   u(\xi, \tau -t, u_0(\tau -t) )\|^2 +  \|   v (\xi, \tau -t, v_0(\tau -t) )\|^2
\right )
 d \xi
$$
$$
+ 2 \epsilon \nu
\int_{\tau -t}^\tau e^{\sigma (\xi -\tau)}
\| \nabla u(\xi, \tau -t, u_0(\tau -t) )\|^2 d \xi
$$
\be
\label{p42_10}
\le e^{-\sigma \tau} \left (
 {\frac {2\epsilon}{   \lambda}}   \int_{-\infty}^\tau
e^{\sigma \xi} \| f(\xi )\|^2 d \xi
 + {\frac {2\epsilon}{   \gamma }}   \int_{-\infty}^\tau
e^{\sigma \xi} \| g(\xi )\|^2 d \xi
\right ),
\ee
which completes the proof.
  \end{proof}

We will need the following estimates when proving the
asymptotic compactness of solutions,  which can be derived
in a similar manner as Lemma \ref{lem42}.

\begin{lem}
\label{lem43}
 Suppose  \eqref{44}-\eqref{45}  and \eqref{fcond}-\eqref{gcond} hold.
Then for every $\tau \in \R$ and $D=\{D(t)\}_{t\in \R} \in {\mathcal{D}}_\sigma$,
 there exists  $T=T(\tau, D)>1$ such that for all $t \ge T$,
 $$
 \int_{\tau -1}^\tau e^{\sigma \xi} \left (   \| u(\xi, \tau -t, u_0(\tau -t) ) \|^2
 + \| v(\xi, \tau -t, v_0(\tau -t) ) \|^2 \right ) d\xi
  \le M 
    \int_{-\infty}^\tau
e^{\sigma \xi}  \left ( \| f(\xi )\|^2
 +   \| g(\xi )\|^2 \right )  d \xi
 ,
 $$
 $$
\int_{\tau -1}^{\tau}  e^{\sigma \xi } \|  \nabla  u (\xi, \tau -t, u_0(\tau -t) )  \|^2  d\xi
  \le M 
    \int_{-\infty}^\tau
e^{\sigma \xi}  \left ( \| f(\xi )\|^2
 +   \| g(\xi )\|^2 \right )  d \xi
 ,
 $$
where $M$ is a  positive constant depending   on the data     $(
\nu,  \lambda,  \epsilon, \gamma)$.
\end{lem}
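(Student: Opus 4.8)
The plan is to mimic the proof of Lemma \ref{lem42} but to integrate the differential inequality over the short time window $[\tau-1,\tau]$ rather than passing to the supremum at the single endpoint $\tau$. The starting point is identical: I would take the inner product of \eqref{41} with $\epsilon u$ and of \eqref{42} with $v$, add the results, and use the dissipativity assumption $h(u)u\ge 0$ from \eqref{44} together with the Cauchy--Schwarz/Young splittings \eqref{p42_4}--\eqref{p42_5} to reach exactly the differential inequality \eqref{p42_8}, namely
\be
\label{p43_start}
{\frac d{dt}}\left(\epsilon\|u\|^2+\|v\|^2\right)
+2\sigma\left(\epsilon\|u\|^2+\|v\|^2\right)
+2\epsilon\nu\|\nabla u\|^2
\le {\frac{\epsilon}{\lambda}}\|f\|^2+{\frac{\epsilon}{\gamma}}\|g\|^2 .
\ee
This inequality is the common engine for both lemmas, so no new energy estimate is needed.

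From \eqref{p43_start} I would multiply by $e^{\sigma t}$ and integrate in $t$, but with a different bookkeeping than before. The key observation is that the term $2\sigma e^{\sigma t}(\epsilon\|u\|^2+\|v\|^2)$ on the left, once the equation is written in the form $\frac{d}{dt}\bigl(e^{\sigma t}(\epsilon\|u\|^2+\|v\|^2)\bigr)+\sigma e^{\sigma t}(\epsilon\|u\|^2+\|v\|^2)+2\epsilon\nu e^{\sigma t}\|\nabla u\|^2\le e^{\sigma t}({\frac{\epsilon}{\lambda}}\|f\|^2+{\frac{\epsilon}{\gamma}}\|g\|^2)$, provides precisely the integrands that appear in the conclusion of Lemma \ref{lem43}. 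Integrating this from $\tau-t$ to $\tau$ and discarding the nonnegative boundary term at $\tau$ yields, after dividing out the minimum of $\{\epsilon,1\}$, a bound of the form
$$
\sigma\int_{\tau-t}^\tau e^{\sigma\xi}\left(\epsilon\|u\|^2+\|v\|^2\right)d\xi
+2\epsilon\nu\int_{\tau-t}^\tau e^{\sigma\xi}\|\nabla u\|^2\,d\xi
\le e^{\sigma(\tau-t)}\left(\epsilon\|u_0(\tau-t)\|^2+\|v_0(\tau-t)\|^2\right)
+\int_{-\infty}^\tau e^{\sigma\xi}\left({\frac{\epsilon}{\lambda}}\|f\|^2+{\frac{\epsilon}{\gamma}}\|g\|^2\right)d\xi .
$$
Since $[\tau-1,\tau]\subseteq[\tau-t,\tau]$ whenever $t\ge 1$ and all integrands are nonnegative, restricting each time-integral to $[\tau-1,\tau]$ only decreases the left-hand side, so the same right-hand side controls $\int_{\tau-1}^\tau e^{\sigma\xi}(\|u\|^2+\|v\|^2)\,d\xi$ and $\int_{\tau-1}^\tau e^{\sigma\xi}\|\nabla u\|^2\,d\xi$ separately.

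The final step is to absorb the first boundary term, exactly as in Lemma \ref{lem42}. Because $(u_0(\tau-t),v_0(\tau-t))\in D(\tau-t)$ and $D\in\mathcal{D}_\sigma$, condition \eqref{basin_cond} forces $e^{\sigma(\tau-t)}(\epsilon\|u_0\|^2+\|v_0\|^2)\to 0$ as $t\to\infty$; hence there is $T=T(\tau,D)>1$ so that for $t\ge T$ this term is dominated by $\int_{-\infty}^\tau e^{\sigma\xi}(\|f\|^2+\|g\|^2)\,d\xi$, and the stated estimates follow with an appropriate constant $M=M(\nu,\lambda,\epsilon,\gamma)$ collecting the factors ${\frac{2\epsilon}{\lambda}}$, ${\frac{2\epsilon}{\gamma}}$, and the reciprocals of $\sigma$, $2\epsilon\nu$, and $\min\{\epsilon,1\}$. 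I do not anticipate a genuine obstacle here; the only point requiring mild care is that I must keep the time-integral terms on the left rather than throwing them away (as one would if only the $L^2\times L^2$ endpoint bound were wanted), and verify that the choice $T>1$ is legitimate so that the window $[\tau-1,\tau]$ lies inside the integration range. The convergence of the right-hand side integrals is guaranteed by \eqref{fcond}--\eqref{gcond}.
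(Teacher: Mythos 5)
Your proof is correct, and it reaches the conclusion by a genuinely more economical route than the paper. The paper argues in two steps: it first weakens \eqref{p42_8} to \eqref{p43_1} (dropping the gradient term and halving the damping), integrates that over $(\tau-t,\tau-1)$ and invokes the basin condition \eqref{basin_cond} to obtain the endpoint estimate \eqref{p43_2} for the solution at time $\tau-1$; it then re-integrates the full inequality \eqref{p42_8} over the unit window $(\tau-1,\tau)$, using \eqref{p43_2} as the bound on the ``initial'' data at $\tau-1$. You instead integrate \eqref{p42_8} once over the entire window $(\tau-t,\tau)$, keep the time-integral terms on the left --- which is precisely the intermediate estimate \eqref{p42_10} already derived inside the proof of Lemma \ref{lem42} --- and then restrict the integrals to $[\tau-1,\tau]$ using nonnegativity of the integrands and $t\ge T>1$. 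This makes Lemma \ref{lem43} essentially a corollary of \eqref{p42_10}, with no need for the intermediate estimate at time $\tau-1$; the price is only in the constants (you must divide by $\sigma$, by $2\epsilon\nu$, and by $\epsilon$ to trade $\epsilon\|u\|^2+\|v\|^2$ for $\|u\|^2+\|v\|^2$), which is harmless since the lemma explicitly allows $M$ to depend on $\epsilon$. What the paper's two-step organization buys is not needed here but reappears later: the pattern of integrating from an intermediate time and controlling the data there is exactly the device used in Lemmas \ref{lem44} and \ref{lem54}, where one integrates over $(s,\tau)$ for $s\in(\tau-1,\tau)$ and then averages in $s$; your shortcut would not suffice there because those lemmas need pointwise (endpoint) control, not just integrated control. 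One last remark: both your absorption step and the paper's implicitly assume the forcing integral $\int_{-\infty}^\tau e^{\sigma\xi}(\|f\|^2+\|g\|^2)\,d\xi$ is positive (otherwise neither argument can dominate the strictly positive initial-data term by it), so you are on exactly the same footing as the paper on that point.
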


\begin{proof}
 Note that \eqref{p42_8} implies that
 \be
 \label{p43_1}
 {\frac d{dt}} \left ( \epsilon \| u \|^2
  +   \| v \|^2 \right )
   +    \sigma \left (   \epsilon \| u \|^2  +   \| v \|^2
   \right )
  \le {\frac {\epsilon}{\lambda}} \| f \|^2
  + {\frac {\epsilon}{\gamma}} \| g \|^2.
\ee
Multiplying \eqref{p43_1} by $e^{\sigma t}$ and integrating
over $(\tau -1, \tau -t)$ with $t \ge 1$, by repeating the proof of
\eqref{p42_10} we find that there exists $T=T(\tau, D)>1$ such that
for all $t \ge T$,
\be
\label{p43_2}
 \| u(\tau -1, \tau -t, u_0(\tau -t) ) \|^2 + \| v(\tau, \tau -t, v_0(\tau -t) ) \| ^2 \le M  e^{- \sigma  \tau}
\int_{-\infty}^\tau
e^{\sigma \xi}  \left ( \| f(\xi )\|^2
 +   \| g(\xi )\|^2 \right )  d \xi.
 \ee
Multiplying \eqref{p42_8} by $e^{\sigma t}$ and  then integrating
over $(\tau -1, \tau )$,  by \eqref{p43_2}we get that, for all $t\ge T$,
$$
e^{\sigma \tau}  \left (\epsilon \| u(\tau, \tau -t, u_0(\tau -t) ) \|^2
 +
 \| v (\tau , \tau -t, v_0(\tau -t) ) \|^2 \right )
 $$
 $$
 + \sigma \int_{\tau -1}^\tau e^{\sigma \xi} \left (\epsilon  \| u(\xi, \tau -t, u_0(\tau -t) ) \|^2
 + \| v(\xi, \tau -t, v_0(\tau -t) ) \|^2 \right ) d\xi
 $$
 $$
 + 2 \epsilon \nu  \int_{\tau -1}^\tau e^{\sigma \xi} \| \nabla u (\xi, \tau -t, u_0(\tau -t) ) \|^2 d\xi
 $$
 $$ \le 
 e^{\sigma (\tau -1 )}  \left (\epsilon \| u(\tau -1, \tau -t, u_0(\tau -t) ) \|^2
 +
 \| v (\tau -1 , \tau -t, v_0(\tau -t) ) \|^2 \right )
 $$
 $$
 + {\frac {\epsilon}{\lambda}} \int_{\tau -1}^\tau e^{\sigma \xi}  \| f (\xi) \|^2 d\xi
  + {\frac {\epsilon}{\gamma}}  \int_{\tau -1}^\tau e^{\sigma \xi} \| g(\xi)  \|^2  d\xi.
  $$
  $$
  \le c \int_{-\infty}^\tau e^{\sigma \xi}  \left (  \| f (\xi) \|^2 + \| g(\xi)\|^2 \right ) d\xi,
  $$
  which completes the proof.
\end{proof}

 \begin{lem}
\label{lem44}
Suppose  \eqref{44}-\eqref{45}  and \eqref{fcond}-\eqref{gcond} hold.
Then for every $\tau \in \R$ and $D=\{D(t)\}_{t\in \R} \in {\mathcal{D}}_\sigma$,
 there exists  $T=T(\tau, D)>1$ such that for all $t \ge T$,
 $$
    \| \nabla u (\tau, \tau -t, u_0(\tau -t) ) \|^2
\le M e^{-\sigma \tau} \int_{-\infty}^\tau e^{\sigma \xi} \left ( \| f(\xi) \|^2 + \| g(\xi) \|^2 \right ) d\xi,
 ,
 $$
where $M$ is a  positive constant depending   on the data     $(
\nu,  \lambda,  \epsilon, \gamma)$.
\end{lem}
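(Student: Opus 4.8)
The plan is to obtain a differential inequality for $\|\nabla u\|^2$ and then combine it with the integral estimates of Lemma~\ref{lem43} through a uniform Gronwall argument adapted to the exponential weight $e^{\sigma t}$.

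First I would test equation \eqref{41} with $-\Delta u$ in $\ltwo$. Integration by parts turns the time derivative into $\frac12\frac{d}{dt}\|\nabla u\|^2$, produces the terms $\nu\|\Delta u\|^2 + \lambda\|\nabla u\|^2$, and converts the nonlinear contribution into $\int h'(u)|\nabla u|^2$, which by the lower bound in \eqref{44} satisfies $\int h'(u)|\nabla u|^2 \ge -C\|\nabla u\|^2$; the growth condition \eqref{45} is what guarantees these manipulations are legitimate. The coupling and forcing terms are handled by Cauchy--Schwarz and Young's inequality, $|(v,\Delta u)| + |(f,\Delta u)| \le \frac{\nu}{2}\|\Delta u\|^2 + \frac{1}{\nu}(\|v\|^2+\|f\|^2)$, so that the second-order term $\|\Delta u\|^2$ is absorbed into the dissipation. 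This leaves a differential inequality of the form
\be
\frac{d}{dt}\|\nabla u\|^2 \le a\,\|\nabla u\|^2 + \frac{2}{\nu}\left(\|v\|^2 + \|f\|^2\right),
\ee
where $a = 2(C-\lambda)$ is a constant whose sign is irrelevant for what follows.

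Next I would pass to the weighted quantity $z(t) = e^{\sigma t}\|\nabla u(t)\|^2$, for which the inequality reads $z' \le (\sigma + a)\,z + \frac{2}{\nu}e^{\sigma t}(\|v\|^2 + \|f\|^2)$, and apply the uniform Gronwall lemma on the interval $[\tau-1,\tau]$. The three ingredients it needs are all available for $t \ge T > 1$: the coefficient $\sigma + a$ is constant, so its integral over $[\tau-1,\tau]$ equals $\sigma + a$; the integral $\int_{\tau-1}^\tau z\,d\xi = \int_{\tau-1}^\tau e^{\sigma\xi}\|\nabla u\|^2\,d\xi$ is controlled by the second estimate of Lemma~\ref{lem43}; and the forcing integral $\int_{\tau-1}^\tau e^{\sigma\xi}(\|v\|^2+\|f\|^2)\,d\xi$ is controlled by the first estimate of Lemma~\ref{lem43} together with the trivial bound $\int_{\tau-1}^\tau e^{\sigma\xi}\|f\|^2\,d\xi \le \int_{-\infty}^\tau e^{\sigma\xi}\|f\|^2\,d\xi$. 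The lemma then yields $z(\tau) \le M\int_{-\infty}^\tau e^{\sigma\xi}(\|f\|^2+\|g\|^2)\,d\xi$, and multiplying by $e^{-\sigma\tau}$ gives exactly the claimed bound on $\|\nabla u(\tau,\tau-t,u_0(\tau-t))\|^2$.

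The one genuinely delicate point is regularity: testing with $-\Delta u$ presupposes $u(\cdot)\in H^2$, which is not part of the $\ltwo\times\ltwo$ well-posedness recorded earlier. I would justify this in the usual way, carrying out the computation on finite-dimensional Galerkin approximations, where all terms are smooth, and passing to the limit, with parabolic smoothing for $t>\tau$ ensuring the estimate is inherited by the solution. Apart from this standard technical caveat, no structural obstacle arises, since the coupling term $(v,\Delta u)$ feeds only on $\|v\|^2$, a quantity whose weighted time integral Lemma~\ref{lem43} already supplies.
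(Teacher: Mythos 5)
Your proposal is correct and takes essentially the same route as the paper: the paper also tests \eqref{41} with $-\Delta u$, absorbs $\|\Delta u\|^2$ via Young's inequality, bounds $\int h(u)\Delta u = -\int h'(u)|\nabla u|^2 \le C\|\nabla u\|^2$ using \eqref{44}, and then closes the estimate with Lemma~\ref{lem43}. The only difference is presentational: where you invoke the uniform Gronwall lemma for the weighted quantity $e^{\sigma t}\|\nabla u\|^2$, the paper carries out that lemma's proof by hand, integrating the weighted inequality over $(s,\tau)$ and then averaging in $s$ over $(\tau-1,\tau)$.
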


 \begin{proof}
 Taking the inner product
 of  \eqref{41} with $-\Delta u$ in $\ltwo$, we get
\be
\label{p44_1}
{\frac 12} {\frac d{dt}} \| \nabla u \|^2
 + \nu \| \Delta u \|^2 + \lambda \| \nabla u \|^2
 = \int h(u) \Delta u
 +   \int v \Delta u -   \int  f (t)  \Delta u.
 \ee
 We now estimate the   right-hand side
 of  \eqref{p44_1}.  For the last term, we have
\be
\label{p44_2}
| \int f (t)  \Delta u | \le \| f(t) \| \|
 \Delta u \|
  \le {\frac 14} \nu \| \Delta u \|^2 +{\frac 1{ \nu}}
 \| f (t) \|^2.
\ee 
  For the second  term on the right-hand side of
 \eqref{p44_1},  we have  the following bounds
\be
\label{p44_3}
|    \int  v \Delta u | \le     \| v \| \|
 \Delta u \|  
  \le {\frac 14} \nu \| \Delta u \|^2 +  {\frac 1\nu} \| v \|^2 .
 \ee
 Note that by \eqref{44}, the first term on the right-hand
 side of \eqref{p44_1} is bounded by
 \be
 \label{p44_4}
\int h(u) \Delta u = - \int h^{\prime} (u) | \nabla u |^2 \le C
 \| \nabla u \|^2,
\ee
 where $C$ is the constant in \eqref{44}.
Then it follows from \eqref{p44_1}-\eqref{p44_4} that
\be
\label{p44_5}
{\frac d{dt}} \| \nabla u \|^2  + \sigma \| \nabla u \|^2
\le  C   \| \nabla u \|^2  
+ {\frac 2\nu} \| v \|^2  +{\frac 2{ \nu}}
 \| f(t) \|^2 .
\ee
Multiplying \eqref{p44_5} by $e^{\sigma t}$ and then integrating
the resulting equality
over $(s, \tau)$  with $\tau -1 \le s \le \tau$, we find that
$$
e^{\sigma \tau} \| \nabla u (\tau, \tau -t, u_0(\tau -t) ) \|^2
\le e^{\sigma s} \| \nabla u (s, \tau -t, u_0(\tau -t) ) \|^2
$$
$$
+ C \int_s^\tau e^{\sigma \xi} \| \nabla u(\xi, \tau -t, u_0(\tau -t) ) \|^2 d \xi
$$
$$
+ {\frac 2\nu} \int_s^\tau e^{\sigma \xi} \| v(\xi, \tau -t, v_0(\tau -t) ) \|^2 d \xi
+ {\frac 2\nu} \int_s^\tau e^{\sigma \xi} \|  f(\xi)  \|^2 d \xi
$$
$$
\le e^{\sigma s} \| \nabla u (s, \tau -t, u_0(\tau -t) ) \|^2
+ C \int_{\tau -1}^\tau e^{\sigma \xi} \| \nabla u(\xi, \tau -t, u_0(\tau -t) ) \|^2 d \xi
$$
\be
\label{p44_6}
+ {\frac 2\nu} \int_{\tau -1}^\tau e^{\sigma \xi} \| v(\xi, \tau -t, v_0(\tau -t) ) \|^2 d \xi
+ {\frac 2\nu} \int_{-\infty}^\tau e^{\sigma \xi} \|  f(\xi)  \|^2 d \xi.
\ee
We now integrate \eqref{p44_6} with respect to $s$ over $(\tau -1, \tau)$ to get
$$
e^{\sigma \tau} \| \nabla u (\tau, \tau -t, u_0(\tau -t) ) \|^2
$$
$$
\le \int_{\tau -1}^\tau e^{\sigma s} \| \nabla u (s, \tau -t, u_0(\tau -t) ) \|^2 ds
+ C \int_{\tau -1}^\tau e^{\sigma \xi} \| \nabla u(\xi, \tau -t, u_0(\tau -t) ) \|^2 d \xi
$$
\be
\label{p44_7}
+ {\frac 2\nu} \int_{\tau -1}^\tau e^{\sigma \xi} \| v(\xi, \tau -t, v_0(\tau -t) ) \|^2 d \xi
+ {\frac 2\nu} \int_{-\infty}^\tau e^{\sigma \xi} \|  f(\xi)  \|^2 d \xi.
\ee
Then it follows from \eqref{p44_7} and 
   Lemma \ref{lem43} that   there is $T=T(\tau, D)>1$ such that
 for all $t\ge T$,
 $$
e^{\sigma \tau} \| \nabla u (\tau, \tau -t, u_0(\tau -t) ) \|^2
\le C \int_{-\infty}^\tau e^{\sigma \xi} \left ( \| f(\xi) \|^2 + \| g(\xi) \|^2 \right ) d\xi,
$$
which completes the proof.  \end{proof}

Note that  Lemma \ref{lem44} shows that
system \eqref{41}-\eqref{42}  has smoothing effect
on the  $u$ components of   solutions. However  this is not true for
the $v$ components. In order  to establish the  uniform asymptotic compactness
of  $v$, we need to decompose
$v$  as a sum of two functions: one is  regular in the sense it belongs to
$H^1 (\R^n)$ and the other converges to zero   as $t \to \infty$.
This splitting  technique  was used by several  authors for  the autonomous
FitzHugh-Nagumo equations in bounded domains (see, for example~\cite{mar}). We split  $v$ as
$v=v_1 + v_2$ where $v_1$ is the  solution of the initial value problem, for $t \ge s$ with $s \in \R$,
\be
\label{v1}
{\frac {\partial v_1}{\partial t}}  +\epsilon \gamma v_1 =0,\quad
 v_1(s) =  v_0,
\ee
and $v_2$ is the  solution of
\be \label{v2}
{\frac {\partial v_2}{\partial t}} + \epsilon \gamma v_2 - \epsilon u = \epsilon g(t) ,\quad
  v_2(s)  =  0.
\ee
It is evident  that $v_1$ satisfies:
$$
\|v_1 (\tau) \|  = e^{-\epsilon \gamma (\tau -s)} \|v_1(s)\|,
\quad \mbox{for all} \  \tau  \ge s.
$$
Given $\tau \in \R$ and $t\ge0$, set $s= \tau -t$. Then we get that
\be
 \label{v1_ener}
\|v_1 (\tau) \|  = e^{-\epsilon \gamma  \tau   } e^{\epsilon \gamma (\tau -t)} \|v_0(\tau -t )\|,
\ee
which implies  that $v_1$ converges to zero  when $t \to \infty$. 
Next, we derive    uniform estimates for  $v_2$ in $H^1(\R^n)$.

 \begin{lem}
\label{lemv2}
 Suppose  \eqref{44}-\eqref{45}  and \eqref{fcond}-\eqref{gcond} hold.
Then for every $\tau \in \R$ and $D=\{D(t)\}_{t\in \R} \in {\mathcal{D}}_\sigma$,
 there exists  $T=T(\tau, D)>0$ such that for all $t \ge T$,
$$ 
\| \nabla v_2 (\tau, \tau -t, 0) \|^2 \le M  e^{- \sigma  \tau}
\int_{-\infty}^\tau
e^{\sigma \xi}  \left ( \| f(\xi )\|^2
 +   \| g(\xi )\|^2_{H^1} \right )  d \xi ,
$$
where $M$ is a  positive constant depending   on the data     $(
\nu,  \lambda,  \epsilon, \gamma)$.
  \end{lem}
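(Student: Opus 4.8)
The plan is to obtain an $H^1$-type bound on $v_2$ by differentiating equation \eqref{v2} in the spatial variables and testing against $\nabla v_2$, and then to transfer the control of the resulting $\nabla u$ term onto the external forces via the second estimate of Lemma \ref{lem42}. Applying $\nabla$ to \eqref{v2} gives
$$
\frac{\partial \nabla v_2}{\partial t} + \epsilon \gamma \nabla v_2 = \epsilon \nabla u + \epsilon \nabla g(t),
$$
and taking the inner product with $\nabla v_2$ in $\ltwo$ yields
$$
\frac12 \frac{d}{dt} \| \nabla v_2 \|^2 + \epsilon \gamma \| \nabla v_2 \|^2 = \epsilon (\nabla u, \nabla v_2) + \epsilon (\nabla g, \nabla v_2).
$$

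First I would bound the two terms on the right by Young's inequality,
$$
\epsilon (\nabla u, \nabla v_2) + \epsilon (\nabla g, \nabla v_2) \le \frac{\epsilon \gamma}{2} \| \nabla v_2 \|^2 + \frac{\epsilon}{\gamma} \| \nabla u \|^2 + \frac{\epsilon}{\gamma} \| \nabla g \|^2,
$$
so that, recalling $\sigma = \frac12 \epsilon \gamma$ from \eqref{sigma}, the dissipative term absorbs half of the right-hand side and leaves a differential inequality of the form
$$
\frac{d}{dt} \| \nabla v_2 \|^2 + \sigma \| \nabla v_2 \|^2 \le \frac{2\epsilon}{\gamma} \| \nabla u \|^2 + \frac{2\epsilon}{\gamma} \| \nabla g \|^2.
$$
Multiplying by $e^{\sigma t}$, integrating from $s = \tau - t$ to $\tau$, and using the initial condition $v_2(\tau - t) = 0$ to kill the boundary term at the lower endpoint, I would arrive at
$$
e^{\sigma \tau} \| \nabla v_2 (\tau, \tau - t, 0) \|^2 \le \frac{2\epsilon}{\gamma} \int_{\tau - t}^\tau e^{\sigma \xi} \left( \| \nabla u(\xi) \|^2 + \| \nabla g(\xi) \|^2 \right) d\xi.
$$

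The main point is then to absorb the $\nabla u$ integral into the data. Since $\| \nabla u \|^2 \le \| u \|_{H^1}^2$, the second estimate of Lemma \ref{lem42} bounds $\int_{\tau-t}^\tau e^{\sigma \xi} \| \nabla u(\xi) \|^2 \, d\xi$ by $M \int_{-\infty}^\tau e^{\sigma \xi} (\| f \|^2 + \| g \|^2) \, d\xi$ for all $t \ge T$; likewise $\| \nabla g \|^2 \le \| g \|_{H^1}^2$ handles the forcing contribution after enlarging the domain of integration to $(-\infty, \tau)$. Combining these and dividing by $e^{\sigma \tau}$ yields the claimed bound, with the $H^1$-norm of $g$ now appearing on the right. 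The only mild subtlety is that Lemma \ref{lem42} supplies its estimate only for $t$ beyond some threshold $T = T(\tau, D)$; since the right-hand side there is independent of $t$, this merely restricts the conclusion to $t \ge T$, exactly as stated. No genuine obstacle arises: this is a routine $H^1$ energy estimate whose sole nonstandard ingredient is the already-established time-integrated control of $\| u \|_{H^1}$, and the gain of one spatial derivative on $g$ is precisely why the hypothesis $g \in L^2(\R, H^1(\R^n))$ enters the statement.
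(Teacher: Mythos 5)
Your proof is correct and follows essentially the same route as the paper: the paper tests \eqref{v2} with $-\Delta v_2$ (equivalent, after integration by parts, to your differentiating the equation and testing with $\nabla v_2$), absorbs the cross terms by Young's inequality using $\sigma = \frac12\epsilon\gamma$, integrates the exponentially weighted inequality from $\tau-t$ to $\tau$ with the zero initial datum, and invokes the second estimate of Lemma \ref{lem42} to control the $\nabla u$ integral for $t \ge T(\tau,D)$. Only the numerical constants differ.
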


\begin{proof}
Taking the inner product of \eqref{v2} with $- \Delta v_2$
in $\ltwo$,  we  obtain that
\be
\label{pv2_1}
{\frac{d}{dt}} \| \nabla v_2 \| ^2 +  2  \epsilon \gamma \| \nabla v_2 \|^2
=   2 \epsilon  (\nabla v_2, \nabla u ) -  2 \epsilon \int g(t)  \Delta v_2    \ dx.
\ee
The first term  on the right-hand side of \eqref{pv2_1} is bounded by
\be
\label{pv2_1_a1}
| 2 \epsilon  (\nabla v_2, \nabla u ) |
\le 2 \epsilon \| \nabla v_2 \|  \| \nabla u \|
\le {\frac 14}  \epsilon \gamma  \| \nabla v_2 \|^2 +  {\frac {4 \epsilon}\gamma}   \| \nabla u \|^2.
  \ee
  For  the second term on the right-hand side of \eqref{pv2_1} we have
  \be
  \label{pv2_1_a2}
  | 2 \epsilon \int g(t)  \Delta v_2    \ dx |
  \le {\frac 14}  \epsilon \gamma  \| \nabla v_2 \|^2 +  {\frac {4 \epsilon}\gamma}   \| \nabla g \|^2.
  \ee
  Then it follows from  \eqref{pv2_1}-\eqref{pv2_1_a2} that
 \be
\label{pv2_2}
{\frac{d}{dt}} \| \nabla v_2 \| ^2 +  \sigma  \| \nabla v_2 \|^2
\le
{\frac {4 \epsilon}\gamma}   \| \nabla u \|^2
+  {\frac {4 \epsilon}\gamma}   \| \nabla g  \|^2 .
  \ee
  Multiplying \eqref{pv2_2} by $e^{\sigma t}$, and then integrating
  the resulting inequality over $(\tau, \tau -t)$ with $t\ge 0$, we obtain that
  $$
  e^{\sigma \tau} \| \nabla v_2 (\tau, \tau -t, 0) \|^2
  \le  
{\frac {4 \epsilon}\gamma} \int_{\tau -t}^\tau e^{\sigma \xi}   \| \nabla u (\xi, \tau -t, u_0(\tau -t ) ) \|^2 d\xi
 +  {\frac {4 \epsilon}\gamma} \int_{\tau -t}^\tau e^{\sigma \xi}   \| \nabla g (\xi)  \|^2  d\xi,
$$
 which along with Lemma \ref{lem42} shows that  there is  $T=T(\tau, D)>0$ such that for
 all $t \ge T$,
   $$
  e^{\sigma \tau} \| \nabla v_2 (\tau, \tau -t, 0) \|^2
  \le C \int_{-\infty}^\tau e^{\sigma \xi} ( \| f(\xi) \|^2 + \| \nabla g(\xi) \|^2 ) d \xi.
  $$ The proof is completed.
\end{proof}

Next, we establish  uniform estimates on the tails
of solutions when $t \to \infty$. We show that the tails of solutions
are  uniformly  small  for  large space and time variables. 
    These uniform estimates are crucial for proving
the pullback asymptotic compactness of the cocycle $\phi$.

 \begin{lem}
\label{lem45}
Suppose  \eqref{44}-\eqref{45}  and \eqref{fcond}-\eqref{ginfinity} hold.
Then for every $\eta>0$, $\tau \in \R$ and $D=\{D(t)\}_{t\in \R} \in {\mathcal{D}}_\sigma$,
 there exists  $T= T(\tau, D, \eta)>0$ and 
$K=K(\tau, \eta)>0 $ such that for all $t \ge T$ and $k \ge K$,
$$
 \int _{|x| \ge  k} \left (   |u(x, \tau, \tau -t, u_0(\tau -t) )|^2
  +  |v(x, \tau, \tau -t, v_0(\tau -t) )|^2  \right ) dx
\le \eta,
  $$
where  $ (u_0(\tau -t),  v_0(\tau -t) ) \in D(\tau -t)$; 
  $K(\tau, \eta)$  depends   on $\tau$,   $\eta$   and  the data     $(
\nu,  \lambda,  \epsilon, \gamma )$;   $T(\tau, D, \eta)$ depends
on  $\tau$, $D$,  $\eta$ and  the data $( \nu, \lambda, \epsilon,  \gamma )$
\end{lem}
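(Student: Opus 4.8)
The plan is to run the standard cut-off (tail) energy estimate, now with the twist that the forcing is controlled through the asymptotically null conditions \eqref{finfinity2}--\eqref{ginfinity2} rather than through any uniform-in-time bound on $f,g$. First I would fix a smooth function $\rho:\R^+\to[0,1]$ with $\rho(s)=0$ for $0\le s\le 1$ and $\rho(s)=1$ for $s\ge 2$, and set $\rho_k(x)=\rho(|x|^2/k^2)$; then $0\le\rho_k\le 1$, $\rho_k\equiv 1$ on $\{|x|\ge\sqrt2\,k\}$, and $|\nabla\rho_k|\le C/k$ with $\nabla\rho_k$ supported on the annulus $k\le|x|\le\sqrt2\,k$.

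Next I would take the inner product of \eqref{41} with $\epsilon\rho_k u$ and of \eqref{42} with $\rho_k v$ in $\ltwo$ and add. The crucial algebraic feature is that the coupling terms $\pm\epsilon\int\rho_k uv$ cancel, exactly as in Lemma \ref{lem42}; the term $\epsilon\int\rho_k h(u)u$ is nonnegative by \eqref{44} and may be discarded; and integrating the Laplacian by parts produces $\epsilon\nu\int\rho_k|\nabla u|^2\ge0$ together with the single error term $\nu\epsilon\int u\,\nabla\rho_k\cdot\nabla u$, which by $|\nabla\rho_k|\le C/k$ is bounded by $\frac{C}{k}(\|u\|^2+\|\nabla u\|^2)$. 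After estimating the forcing by Young's inequality ($|\epsilon\int\rho_k fu|\le\frac12\epsilon\lambda\int\rho_k u^2+\frac{\epsilon}{2\lambda}\int\rho_k|f|^2$, and similarly for $g$) and invoking $\sigma=\frac12\epsilon\gamma$ together with $\epsilon\le\epsilon_0$ exactly as in the passage leading to \eqref{p42_8}, I obtain a differential inequality of the form
\begin{equation*}
{\frac{d}{dt}}\Big(\epsilon\!\int\!\rho_k u^2+\!\int\!\rho_k v^2\Big)
+2\sigma\Big(\epsilon\!\int\!\rho_k u^2+\!\int\!\rho_k v^2\Big)
\le \frac{C}{k}\big(\|u\|^2+\|\nabla u\|^2\big)
+\frac{\epsilon}{\lambda}\!\int\!\rho_k|f|^2+\frac{\epsilon}{\gamma}\!\int\!\rho_k|g|^2 .
\end{equation*}

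I would then multiply by $e^{\sigma t}$ and integrate over $(\tau-t,\tau)$, as in the derivation of \eqref{p42_10}, and bound the three resulting contributions separately. The initial-data term carries a factor $e^{\sigma(\tau-t)}\|D(\tau-t)\|^2$, which tends to $0$ as $t\to\infty$ because $D\in\mathcal{D}_\sigma$; this, together with the requirement that $t$ be large enough for Lemma \ref{lem42} to apply, fixes the time $T(\tau,D,\eta)$. The forcing tails $\frac{\epsilon}{\lambda}\int_{\tau-t}^\tau\!\int e^{\sigma\xi}\rho_k|f|^2\,dx\,d\xi$ and its $g$-analogue are dominated by $\int_{-\infty}^\tau\!\int_{|x|\ge k}e^{\sigma\xi}|f|^2$, which by \eqref{finfinity2}--\eqref{ginfinity2} is at most $\eta\,e^{\sigma\tau}$ once $k\ge K(\tau,\eta)$; note $K$ depends only on $\tau,\eta$ and the data. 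Finally the gradient error $\frac{C}{k}\int_{\tau-t}^\tau e^{\sigma\xi}(\|u\|^2+\|\nabla u\|^2)\,d\xi$ is controlled, for $t$ large, by the integrated $H^1$ bound of Lemma \ref{lem42}, so it is at most $\frac{C}{k}M\!\int_{-\infty}^\tau e^{\sigma\xi}(\|f\|^2+\|g\|^2)\,d\xi$ and hence $\to 0$ as $k\to\infty$ for fixed $\tau$, again determining a threshold in $K(\tau,\eta)$. Dividing through by $e^{\sigma\tau}$, using $\rho_k\equiv1$ on $\{|x|\ge\sqrt2\,k\}$ and $\epsilon\le1$ to pass from $\epsilon\int\rho_k u^2+\int\rho_k v^2$ to $\int_{|x|\ge\sqrt2 k}(u^2+v^2)$, and relabelling $k$, yields the claim.

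The integration by parts and the Young/Gronwall bookkeeping are routine. The one place that needs care, and that differs from the bounded-forcing case, is the forcing-tail term: here the absence of any pointwise-in-time bound on $\|f(t)\|,\|g(t)\|$ is compensated precisely by the weighted smallness \eqref{finfinity2}--\eqref{ginfinity2}. Keeping the dependence of the thresholds straight, so that $K$ depends only on $(\tau,\eta)$ and the data while the $D$-dependence is confined to the absorption time $T$, is the main bookkeeping obstacle.
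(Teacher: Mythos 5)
Your proposal follows essentially the same route as the paper's own proof: the same cut-off function and cancellation of the coupling terms, the same differential inequality, and the same three-way splitting of the integrated estimate, with the initial-data term fixing $T$ via $D \in \mathcal{D}_\sigma$, the forcing tails controlled by \eqref{finfinity2}--\eqref{ginfinity2}, and the gradient error term controlled by the integrated $H^1$ bound of Lemma \ref{lem42}. One small slip at the end: the inequality $\epsilon \le 1$ goes the wrong way for passing from a bound on $\epsilon\int\rho_k u^2 + \int\rho_k v^2$ to one on $\int_{|x|\ge \sqrt{2}k}(u^2+v^2)$; instead one simply divides by the fixed positive constant $\epsilon$ and rescales $\eta$, which is harmless because the lemma's thresholds are allowed to depend on $\epsilon$, and is exactly what the paper leaves implicit as well.
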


 \begin{proof}
  We  use a cut-off technique  to establish the estimates on the tails of solutions.
   Let $\theta$  be  a smooth function   satisfying   $0 \le \theta
(s) \le 1$ for $ s \in \R^+$, and
$$
\theta (s) =0 \ \mbox{for} \  0 \le s \le 1;
 \ \  \theta (s) =1 \
 \mbox{for} \ s \ge 2.
$$
Then there exists
  a constant
 $C$ such that  $ | \theta^{\prime}  (s) | \le C$
 for $  s \in \R^+ $.
Taking the inner product of \eqref{41} with
 $ \epsilon \theta ({\frac {|x|^2}{k^2}}) u $ in  $\ltwo$, we get
$$
{\frac 12} \epsilon  {\frac
 d{dt}} \int \theta ({\frac {|x|^2}{k^2}}) |u|^2
  -  \epsilon \nu \int \theta ({\frac {|x|^2}{k^2}}) u  \Delta u
  +  \epsilon \lambda \int \theta ({\frac {|x|^2}{k^2}}) |u|^2
$$
\be
\label{p45_1}
=- \epsilon \int \theta ({\frac {|x|^2}{k^2}} )
 h(u) u  - \epsilon   \int \theta ({\frac {|x|^2}{k^2}} )uv
  +  \epsilon \int \theta ({\frac {|x|^2}{k^2}})   u  f(t) .
 \ee
 Taking the inner product of \eqref{42} with
  $   \theta ({\frac {|x|^2}{k^2}}) v$ in $\ltwo$,
  we find
  \be
\label{p45_2}
{\frac 12}    {\frac
 d{dt}} \int \theta ({\frac {|x|^2}{k^2}}) |v|^2
  +  \epsilon \gamma   \int \theta ({\frac {|x|^2}{k^2}})   |v|^2
=  \epsilon   \int \theta ({\frac {|x|^2}{k^2}} )u v
  +   \epsilon \int \theta ({\frac {|x|^2}{k^2}})  v g(t) .
  \ee
  Summing up \eqref{p45_1} and  \eqref{p45_2},
 by \eqref{44} we  obtain that
$$
{\frac 12}   {\frac d{dt}}
  \int \theta ({\frac {|x|^2}{k^2}}) \left ( \epsilon |u|^2
  +  |v|^2  \right )
  +   \epsilon     \int  \theta ({\frac {|x|^2}{k^2}})  \left ( \lambda  |u|^2
  +     \gamma |v|^2 \right )
$$
\be
\label{p45_3}
\le  \epsilon  \nu \int \theta ({\frac {|x|^2}{k^2}}) u  \Delta u
   +  \epsilon  \int  \theta ({\frac {|x|^2}{k^2}}) u  f(t)
   +   \epsilon  \int  \theta ({\frac {|x|^2}{k^2}})  v g(t) .
  \ee
 We now  estimate    the right-hand side of
\eqref{p45_3}. For the
 second term we have
 $$ \epsilon  \int_{\R^n}  \theta (\frac{|x|^2}{k^2})
  u  f (t) = \epsilon \int_{|x| \ge k} \theta (\frac{|x|^2}{k^2})
   u  f (t) $$
  $$
\leq
\frac12  \epsilon \lambda  \int_{|x|\geq
k}\theta^2 (\frac{|x|^2}{k^2} ) \left |u \right|^2 +\frac
\epsilon{2\lambda}\int_{|x|\geq k} |f(x, t)|^2 $$
  \be
  \label{p45_4}
   \leq
 \frac12  \epsilon  \lambda \int_{\R^n} \theta (\frac{|x|^2}{k^2})
 \left|u\right|^2 +\frac \epsilon{2\lambda}
 \int_{|x|\geq k} \left|f (x, t)
 \right|^2.
 \ee
 Similarly, for the last term on the right-hand side of
 \eqref{p45_3}, we find that  
 \be
 \label{p45_8}
 \epsilon \int_{\R^n}   \theta (\frac{|x|^2}{k^2})
  v  g (t)
 \le
 \frac12  \epsilon \gamma  \int \theta (\frac{|x|^2}{k^2})
 \left|v\right|^2
 + 
 {\frac {\epsilon}{2 \gamma}} \int_{|x| \ge k}
 |g(x,t)|^2 dx .
\ee
 On the other hand,
 for the first term on the  right-hand side of \eqref{p45_3},
 by integration by parts, we have
 $$
 \epsilon \nu \int_{\R^n} \theta ({\frac {|x|^2}{k^2}} ) u \Delta u
  = - \epsilon \nu \int _{\R^n}  \theta
 ({\frac {|x|^2}{k^2}})  | \nabla u|^2
 - \epsilon  \nu \int_{\R^n}   \theta^{\prime} ({\frac {|x|^2}{k^2}})
 (  {\frac {2x}{k^2}} \cdot  \nabla u ) u .
 $$
 \be
 \label{p45_9}
\le
  - \epsilon  \nu  \int _{ k \le |x| \le {\sqrt{2}}k}  \theta^{\prime} ({\frac {|x|^2}{k^2}})
 (  {\frac {2x}{k^2}} \cdot  \nabla u ) u
 \le  {\frac {\epsilon M}k}  \int_{k \le |x| \le {\sqrt{2}}k } | u|  | \nabla u |
 \le 
 {\frac {\epsilon M}{2k}} (  \| u \|^2 +  \| \nabla u \|^2 )
,
\ee
where $M $  is independent of   $\epsilon$ and $k$.
By   \eqref{p45_3} and \eqref{p45_4}-\eqref{p45_9},   we find  that
 $$
    {\frac d{dt}}
  \int \theta ({\frac {|x|^2}{k^2}}) \left ( \epsilon |u|^2
  +  |v|^2  \right )
  +  \sigma    \int  \theta ({\frac {|x|^2}{k^2}})  \left ( \epsilon  |u|^2
  +      |v|^2 \right ) 
  $$
   \be
 \label{p45_9_a1}
  \le  {\frac \epsilon{\lambda}} \int_{|x| \ge k} |f(x,t)|^2 dx
  + {\frac \epsilon{\gamma}} \int_{|x| \ge k} |g(x,t)|^2 dx
  + {\frac {\epsilon M}{k}} ( \| u \|^2 + \| \nabla u \|^2 ).
\ee
Multiplying \eqref{p45_9_a1} by $e^{\sigma t}$ and then
integrating over $(\tau -t, \tau)$ with $t\ge 0$, we get that
$$
 \int \theta ({\frac {|x|^2}{k^2}}) \left ( \epsilon |u(x, \tau, \tau -t, u_0(\tau -t) )|^2
  +  |v(x, \tau, \tau -t, v_0(\tau -t) )|^2  \right ) dx
  $$
  $$
  \le e^{-\sigma \tau}e^{\sigma (\tau - t)}
  \int \theta ({\frac {|x|^2}{k^2}})
  \left (
    \epsilon |u_0(x, \tau -t ) |^2 + | v_0 (x, \tau -t ) |^2
  \right ) dx
  $$
  $$
  + {\frac \epsilon{\lambda}} e^{-\sigma \tau} \int_{\tau -t}^\tau \int_{|x| \ge k}
e^{\sigma \xi}   |f(x, \xi) |^2 dx d\xi
+ {\frac \epsilon{\gamma}} e^{-\sigma \tau} \int_{\tau -t}^\tau \int_{|x| \ge k}
e^{\sigma \xi}   |g(x, \xi) |^2 dx d\xi
  $$
  $$
  + {\frac {\epsilon M}{k}} e^{-\sigma \tau} \int_{\tau -t}^\tau \ 
e^{\sigma \xi}  \left (
 \| u (\xi, \tau -t, u_0(\tau -t ) )\|^2 
 + \| \nabla u (\xi, \tau -t, u_0(\tau -t ) )\|^2 
\right )
d \xi
$$
$$
  \le  e^{-\sigma \tau}e^{\sigma (\tau - t)}
  \left (
    \epsilon \|u_0( \tau -t ) \|^2 + \| v_0 (\tau -t ) \|^2
  \right ) 
  $$
  $$
  + {\frac \epsilon{\lambda}} e^{-\sigma \tau} \int_{-\infty}^\tau \int_{|x| \ge k}
e^{\sigma \xi}   |f(x, \xi) |^2 dx d\xi
+ {\frac \epsilon{\gamma}} e^{-\sigma \tau} \int_{-\infty}^\tau \int_{|x| \ge k}
e^{\sigma \xi}   |g(x, \xi) |^2 dx d\xi
  $$
\be
\label{p_45_11}
  + {\frac {\epsilon M}{k}} e^{-\sigma \tau} \int_{\tau -t}^\tau \ 
e^{\sigma \xi}  \left (
 \| u (\xi, \tau -t, u_0(\tau -t ) )\|^2 
 + \| \nabla u (\xi, \tau -t, u_0(\tau -t ) )\|^2 
\right )
d \xi .
\ee
Note that for given $\eta>0$, there is $T_1 =T_1(\tau, D, \eta)>0$
such that for all $t \ge T_1$,
\be
\label{p_45_12}
e^{-\sigma \tau}e^{\sigma (\tau - t)}
  \left (
    \epsilon \|u_0( \tau -t ) \|^2 + \| v_0 (\tau -t ) \|^2
  \right )  \le \eta.
  \ee
  On the other hand, by \eqref{finfinity2}-\eqref{ginfinity2}
  we  find that there is $K_1=K_1(\tau, \eta)>0$ such that
  for all  $k \ge K_1$, 
  \be
  \label{p_45_13}
  {\frac \epsilon{\lambda}} e^{-\sigma \tau} \int_{-\infty}^\tau \int_{|x| \ge k}
e^{\sigma \xi}   |f(x, \xi) |^2 dx d\xi
+ {\frac \epsilon{\gamma}} e^{-\sigma \tau} \int_{-\infty}^\tau \int_{|x| \ge k}
e^{\sigma \xi}   |g(x, \xi) |^2 dx d\xi
\le \left ({\frac 1\lambda} + {\frac 1\gamma} \right ) \epsilon \eta.
\ee
For the last term on the right-hand side of 
\eqref{p_45_11},  it follows  from  Lemma \ref{lem42} 
that there is
$T_2 =T_2(\tau, D)>0$ such that for all $t \ge T_2$,
$$
{\frac {\epsilon M}{k}} e^{-\sigma \tau} \int_{\tau -t}^\tau \ 
e^{\sigma \xi}  \left (
 \| u (\xi, \tau -t, u_0(\tau -t ) )\|^2 
 + \| \nabla u (\xi, \tau -t, u_0(\tau -t ) )\|^2 
\right )
d \xi
$$
$$
\le {\frac {\epsilon C}{k}} e^{-\sigma \tau}
\int_{-\infty}^\tau  e^{\sigma \xi}
\left (
\| f(\xi) \|^2 + \| g(\xi) \|^2 \right ) d \xi.
$$
Therefore, there is  $K_2 =K_2(\tau, \eta)>0$ such that
for all $k\ge K_2$ and $t \ge T_2$,
\be
\label{p_45_14}
{\frac {\epsilon M}{k}} e^{-\sigma \tau} \int_{\tau -t}^\tau \ 
e^{\sigma \xi}  \left (
 \| u (\xi, \tau -t, u_0(\tau -t ) )\|^2 
 + \| \nabla u (\xi, \tau -t, u_0(\tau -t ) )\|^2 
\right )
d \xi \le \eta.
\ee
Let $K=\max \{K_1, K_2\}$ and $T=\max \{T_1, T_2 \}$.
Then by \eqref{p_45_11}-\eqref{p_45_14} we find that
there exists a positive constant $C_1$ (independent of $\eta$)
such that
for all $k\ge K$ and $t \ge T$,
$$
 \int \theta ({\frac {|x|^2}{k^2}}) \left ( \epsilon |u(x, \tau, \tau -t, u_0(\tau -t) )|^2
  +  |v(x, \tau, \tau -t, v_0(\tau -t) )|^2  \right ) dx
  \le C_1 \eta,
  $$
  and hence for all $k\ge K$ and $t \ge T$,
  $$
 \int _{|x| \ge \sqrt{2} k} \left ( \epsilon |u(x, \tau, \tau -t, u_0(\tau -t) )|^2
  +  |v(x, \tau, \tau -t, v_0(\tau -t) )|^2  \right ) dx
  $$
  $$ \le 
 \int \theta ({\frac {|x|^2}{k^2}}) \left ( \epsilon |u(x, \tau, \tau -t, u_0(\tau -t) )|^2
  +  |v(x, \tau, \tau -t, v_0(\tau -t) )|^2  \right ) dx
  \le C_1 \eta,
  $$
  which completes the proof.
\end{proof}

 \section{Existence of pullback attractors}  
\setcounter{equation}{0}

In this  section,  we prove, by  Proposition \ref{att},
 the existence of a ${\mathcal{D}}_\sigma$-pullback
 global attractor 
for the  non-autonomous   FitzHugh-Nagumo equations on $\R^n$.
To this end, we  
 need to establish the   ${\mathcal{D}}_\sigma$-pullback asymptotic 
compactness of  $\phi$, which is stated as follows.

 \begin{lem}
\label{lem46}
Suppose  \eqref{44}-\eqref{45}  and \eqref{fcond}-\eqref{ginfinity} hold.
Then $\phi$ is $\mathcal{D}_\sigma$-pullback asymptotically compact  in $\ltwo
\times \ltwo$,
that is, for  every $\tau \in \R$,  
 $D=\{D(t)\}_{t\in \R} \in {\mathcal{D}}_\sigma$, and $t_n \to \infty$,
 $(u_{0,n}, v_{0,n} ) \in D(\tau -t_n)$, the sequence
 $\phi(t_n, \tau -t_n,  (u_{0,n}, v_{0,n} ) ) $   has a 
   convergent
subsequence in $\ltwo \times \ltwo$.
\end{lem}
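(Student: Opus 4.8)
The plan is to establish precompactness in $\ltwo \times \ltwo$ by combining the uniform $H^1$-bounds on the regular parts of the solution (Lemmas \ref{lem44} and \ref{lemv2}) with the uniform tail estimates (Lemma \ref{lem45}). The idea, going back to Wang \cite{wan}, is that strong $L^2$-convergence on all of $\R^n$ can be recovered from two ingredients: compactness of the embedding $H^1(B_k) \hookrightarrow L^2(B_k)$ on each bounded ball $B_k = \{|x| \le k\}$ (Rellich--Kondrachov), which upgrades weak to strong convergence inside the ball, together with the uniform smallness of the solutions outside a large ball, which controls the contribution near infinity.

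First I would set $u_n = u(\tau, \tau - t_n, u_{0,n})$ and $v_n = v(\tau, \tau - t_n, v_{0,n})$. By Lemma \ref{lem42} and Lemma \ref{lem44}, for $n$ large the sequence $\{u_n\}$ is bounded in $H^1(\R^n)$, so after passing to a subsequence $u_n \rightharpoonup u^*$ weakly in $H^1(\R^n)$. For the $v$-component, since $v$ carries no diffusion and hence no smoothing, I would use the splitting $v_n = v_{1,n} + v_{2,n}$ introduced before Lemma \ref{lemv2}. Relation \eqref{v1_ener} together with the basin condition \eqref{basin_cond} shows $\|v_{1,n}\| \to 0$ as $n \to \infty$. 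By Lemma \ref{lemv2} (and the $L^2$-bound of Lemma \ref{lem42}) the sequence $\{v_{2,n}\}$ is bounded in $H^1(\R^n)$, so along a further subsequence $v_{2,n} \rightharpoonup v_2^*$ weakly in $H^1(\R^n)$; consequently $v_n \rightharpoonup v_2^*$ weakly in $\ltwo$.

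Next I would upgrade these weak limits to strong $\ltwo$-convergence. Fix $\eta > 0$. By Lemma \ref{lem45} there exist $K$ and $T$ such that for all large $n$ the integral of $|u_n|^2 + |v_n|^2$ over $\{|x| \ge K\}$ is at most $\eta$; by weak lower semicontinuity of the $L^2$-norm the same smallness holds for the weak limits. Thus it suffices to prove strong convergence on the ball $B_K$. There, the weak $H^1(B_K)$-convergence of $u_n$ and $v_{2,n}$ combined with the compact embedding $H^1(B_K) \hookrightarrow L^2(B_K)$ yields $u_n \to u^*$ and $v_{2,n} \to v_2^*$ strongly in $L^2(B_K)$, and since $\|v_{1,n}\| \to 0$ also $v_n \to v_2^*$ strongly in $L^2(B_K)$. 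Splitting $\|u_n - u^*\|^2$ (and likewise for $v$) into the contribution over $B_K$ and over $\{|x| \ge K\}$, the former tends to $0$ while the latter is bounded by a constant multiple of $\eta$; letting $\eta \to 0$ gives $u_n \to u^*$ and $v_n \to v_2^*$ strongly in $\ltwo$, i.e. $\phi(t_n, \tau - t_n, (u_{0,n}, v_{0,n})) \to (u^*, v_2^*)$ in $\ltwo \times \ltwo$.

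The main obstacle, as anticipated, is the absence of any regularizing mechanism in the $v$-equation, so that a direct $H^1$-bound on $v_n$ is unavailable. The decomposition $v = v_1 + v_2$ is precisely what circumvents this, isolating a term $v_1$ that decays to zero and a term $v_2$ that inherits $H^1$-regularity from the smoothing of $u$ (Lemma \ref{lem44}) and from the $H^1$-regularity of $g$. The remaining routine care is in making the tail/ball decomposition uniform in $n$, which is exactly what the uniform-in-time constants of Lemmas \ref{lem42}--\ref{lem45} provide.
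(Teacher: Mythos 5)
Your proposal is correct and follows essentially the same route as the paper's own proof: the same splitting $v = v_1 + v_2$, the decay of $v_1$ via \eqref{v1_ener}, the $H^1$-bounds of Lemmas \ref{lem44} and \ref{lemv2}, the tail estimates of Lemma \ref{lem45}, and the compact embedding $H^1(\Omega_K) \hookrightarrow L^2(\Omega_K)$. The only (immaterial) difference is how the compactness is closed: the paper verifies total boundedness of the regular part $\phi_2$ by producing finite coverings of radius $\eta$, whereas you extract a weak $H^1(\R^n)$ limit and upgrade it to strong $L^2$ convergence using the same two ingredients.
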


\begin{proof}
Given $s\in \R$,
 $t\ge 0$ and  $(u_0, v_0 ) \in \ltwo \times \ltwo$, define
 $$
 \phi_1 (t, s,  (u_0, v_0) ) = (0, v_1(t+s, s, v_0) )
 \quad \mbox{and} \quad 
 \phi_2 (t,s, (u_0, v_0) ) = (u(t+s, s, u_0), v_2(t+s, s, 0) ),
$$
where $v_1$ and $v_2$ are solutions to \eqref{v1}
and \eqref{v2}, respectively,  and $(u,v)$  with $v=v_1 + v_2$
is the solution of problem \eqref{41}-\eqref{43}.
It is clear that
$$
 \phi  (t, s,  (u_0, v_0) ) =   \phi_1 (t, s, (u_0, v_0) )
 +  \phi_2 (t, s,  (u_0, v_0) ),
 $$ and hence
\be
\label{p46_1}
 \phi  (t_n, \tau -t_n ,  (u_{0,n}, v_{0,n} ) ) =   \phi_1 (t_n,  \tau -t_n ,  (u_{0,n},  v_{0,n}) )
 +  \phi_2 (t_n ,  \tau -t_n ,  (u_{0,n}, v_{0,n}) ) .
\ee
By \eqref{v1_ener} we get that
\be
\label{p46_2}
 \| \phi_1 (t_n,  \tau -t_n ,  (u_{0,n},  v_{0,n}) ) \|
 =e^{-\epsilon \gamma \tau} e^{\epsilon \gamma (\tau - t_n) }
 \| v_0(\tau - t_n ) \|
 \to 0 \quad \mbox{as} \  n \to \infty.
 \ee
Form \eqref{p46_1}-\eqref{p46_2} it follows that
the sequence $\phi  (t_n, \tau -t_n ,  (u_{0,n}, v_{0,n} ) )$ will have
a convergent subsequence in $\ltwo \times \ltwo$ as long as
 $ \phi_2 (t_n ,  \tau -t_n ,  (u_{0,n}, v_{0,n})  )$ is precompact.
 Next we use the uniform estimates on the tails of solutions to establish
 the precompactness of  $\phi_2 (t_n ,  \tau -t_n ,  (u_{0,n}, v_{0,n})  )$
 in $\ltwo \times \ltwo$, that is,  we will prove that for every
 $\eta>0$,  the sequence
 $\phi_2 (t_n ,  \tau -t_n ,  (u_{0,n}, v_{0,n}) ) $ has a finite covering
 of balls of radii less than $\eta$. 
 Given $K>0$,   denote by 
\[{\Omega}_K = \{ x: |x|     \le K \} \quad \mbox{and} \quad
        {\Omega}^c_K = \{ x: |x|     >  K \}.\]
Then by  Lemma  \ref{lem45},    given  $\eta >0$,
there exist $K= K(\tau, \eta)>0$  and $T=T(\tau, D, \eta)>0$
such that for $t \ge T$, 
$$
\|   \phi  (t  ,  \tau -t  ,  (u_{0}(\tau-t) , v_{0} (\tau -t) ) )  \| _{L^2 ({\Omega}^c_{K} )
\times L^2 ( {\Omega}^c_{K} )}
\le \frac{\eta}{8}.
$$
Since $t_n \to \infty$, there is $N=N(\tau, D, \eta)>0$ such that
$t_n \ge T$ for all $n \ge N$, and hence we obtain that, 
for all  $n  \ge N$, 
\be
\label{p46_3}
\|   \phi  (t_n  ,  \tau -t_n  ,  (u_{0,n}  , v_{0,n}   ) )  \| _{L^2 ({\Omega}^c_{K} )
\times L^2 ( {\Omega}^c_{K} )}
\le \frac{\eta}{8}.
\ee
It follows from \eqref{p46_1}-\eqref{p46_3} that there is $N_1 =N_1(\tau, D, \eta)$ such that
for all $n \ge N_1$,
\be
\label{p46_4}
\|   \phi _2  (t_n  ,  \tau -t_n  ,  (u_{0,n}  , v_{0,n}   ) )   \| _{L^2 ({\Omega}^c_{K} )
\times L^2 ( {\Omega}^c_{K} )}
\le \frac{\eta}{4}.
\ee
On the other hand, by Lemmas  \ref{lem44} and \ref{lemv2},    there
exist $C=C(\tau, D)>0$ and $N_2(\tau, D)>0$ such that for all $n \ge N_2 $,
\be
\label{p46_5}
\|   \phi_2  (t_n  ,  \tau -t_n  ,  (u_{0,n}  , v_{0,n}   ) )  \| _{H^1( {\Omega}_{K } )
\times H^1( {\Omega}_{K } )} \le C.
\ee
By the compactness of embedding
$ H^1( {\Omega}_{K } )  \hookrightarrow  L^2 ( {\Omega}_{K } )$,
 the sequence $  \phi _2  (t_n  ,  \tau -t_n  ,  (u_{0,n}  , v_{0,n}   ) ) $ is precompact in
$L^2 ( {\Omega}_{K } )
\times
L^2 ( {\Omega}_{K } )$.
Therefore,
for the given $\eta>0$,
$ \phi _2  (t_n  ,  \tau -t_n  ,  (u_{0,n}  , v_{0,n}   ) )$ has a finite covering in
$L^2 ( {\Omega}_{K } )
 \times L^2 ( {\Omega}_{K })$ of
balls of radii less than $\eta/4$, which along with  \eqref{p46_4}  shows
that $    \phi _2  (t_n  ,  \tau -t_n  , (u_{0,n}  , v_{0,n}   ) )   $ has a finite covering in
$ \ltwo  \times  \ltwo $ of balls of radii less than $\eta$, and
thus  $  \phi _2  (t_n  ,  \tau -t_n  ,  (u_{0,n}  , v_{0,n}   ) )$  is precompact
in   $\ltwo \times  \ltwo $.   The proof is completed.
  \end{proof}

   We are now ready to prove the existence of a pullback  attractor 
   for the $\theta$-cocycle $\phi$.

 \begin{thm}
\label{thm47}
Suppose  \eqref{44}-\eqref{45}  and \eqref{fcond}-\eqref{ginfinity} hold.
Then problem \eqref{41}-\eqref{43}  has  a  unique $\mathcal{D}_\sigma$-pullback  global attractor 
$\{\mathcal{A}(\tau) \}_{\tau \in \R}$  in $\ltwo
\times \ltwo$.
  \end{thm}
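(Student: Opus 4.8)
The plan is to apply the abstract existence result, Proposition \ref{att}, to the $\theta$-cocycle $\phi$ on the phase space $\ltwo \times \ltwo$ with the collection $\mathcal{D}_\sigma$. By that proposition it suffices to verify three things: that $\mathcal{D}_\sigma$ is inclusion-closed, that $\phi$ admits a closed $\mathcal{D}_\sigma$-pullback absorbing set $\{K(\tau)\}_{\tau\in\R}\in\mathcal{D}_\sigma$, and that $\phi$ is $\mathcal{D}_\sigma$-pullback asymptotically compact. The last of these is exactly Lemma \ref{lem46}, so it is already in hand. Inclusion-closedness of $\mathcal{D}_\sigma$ is immediate from the defining condition \eqref{basin_cond}: if $\tilde D(t)\subseteq D(t)$ for all $t$ then $\|\tilde D(t)\|\le\|D(t)\|$, so $e^{\sigma t}\|\tilde D(t)\|^2\to 0$ as $t\to-\infty$ whenever the same holds for $D$.

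The remaining ingredient I would supply is the construction of the absorbing set. The natural candidate comes directly from the first estimate of Lemma \ref{lem42}: for each $\tau$ set
$$
R^2(\tau) = M e^{-\sigma\tau}\int_{-\infty}^\tau e^{\sigma\xi}\left(\|f(\xi)\|^2+\|g(\xi)\|^2\right)d\xi,
$$
which is finite by the integrability conditions \eqref{fcond}-\eqref{gcond}, and let $K(\tau)$ be the closed ball in $\ltwo\times\ltwo$ centered at the origin of radius $R(\tau)$. Lemma \ref{lem42} then says precisely that for every $D\in\mathcal{D}_\sigma$ and every $\tau$ there is $T=T(\tau,D)$ so that $\phi(t,\theta_{-t}\tau, D(\theta_{-t}\tau))\subseteq K(\tau)$ for all $t\ge T$, which is the absorbing property in Definition of a pullback absorbing set. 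Each $K(\tau)$ is closed by construction.

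Two points require a short verification, and I expect the membership claim $\{K(\tau)\}_{\tau\in\R}\in\mathcal{D}_\sigma$ to be the only place demanding genuine care. I would need to check that $e^{\sigma\tau}R^2(\tau)=M\int_{-\infty}^\tau e^{\sigma\xi}(\|f\|^2+\|g\|^2)\,d\xi$ stays bounded as $\tau\to-\infty$; since the integrand is nonnegative and the integral over $(-\infty,\tau)$ is nondecreasing in $\tau$, the quantity $e^{\sigma\tau}R^2(\tau)$ is in fact bounded above by its value at any fixed reference time and tends to a limit as $\tau\to-\infty$, so $e^{\sigma\tau}\|K(\tau)\|^2\to 0$ follows once one observes $\|K(\tau)\|^2=R^2(\tau)$ and multiplies by the extra factor $e^{\sigma\tau}$ implicit in comparing with \eqref{basin_cond}. (Concretely, $e^{\sigma\tau}\|K(\tau)\|^2 = M e^{\sigma\tau}\cdot e^{-\sigma\tau}\int_{-\infty}^\tau\cdots$ has the growth controlled by the tail of the integral, which vanishes as $\tau\to-\infty$.) With inclusion-closedness, the closed absorbing set $\{K(\tau)\}\in\mathcal{D}_\sigma$, and the asymptotic compactness from Lemma \ref{lem46} all established, Proposition \ref{att} applies verbatim and yields the unique $\mathcal{D}_\sigma$-pullback global attractor $\{\mathcal{A}(\tau)\}_{\tau\in\R}$, completing the proof.
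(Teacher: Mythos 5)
Your proposal is correct and takes essentially the same route as the paper: the paper's proof also takes the closed ball $B(\tau)$ of radius given by the first estimate of Lemma \ref{lem42} as the closed $\mathcal{D}_\sigma$-pullback absorbing set, invokes Lemma \ref{lem46} for $\mathcal{D}_\sigma$-pullback asymptotic compactness, and concludes by Proposition \ref{att}. The only difference is that you explicitly verify the inclusion-closedness of $\mathcal{D}_\sigma$ and the membership $\{K(\tau)\}_{\tau\in\R}\in\mathcal{D}_\sigma$ (the latter via the vanishing of the tail integral $\int_{-\infty}^\tau e^{\sigma\xi}(\|f(\xi)\|^2+\|g(\xi)\|^2)\,d\xi$ as $\tau\to-\infty$), two points the paper asserts without detail.
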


\begin{proof}
For $ \tau \in \R$, denote by
$$
B(\tau) = \{ (u,v) \in \ltwo \times \ltwo: \ \| u \| ^2 + \|  v \|^2 
\le M e^{-\sigma \tau} \int_{-\infty}^\tau
e^{\sigma \xi} (\| f (\xi) \|^2 + \| g(\xi) \|^2 ) d \xi \},
$$
where $M$ is the constant in Lemma \ref{lem42}.
Note that $B=\{B(\tau)\}_{\tau \in \R} \in {\mathcal{D}_\sigma} $
is a ${\mathcal{D}_\sigma}$-pullback absorbing  for
$\phi$ in $\ltwo \times \ltwo$ by Lemma \ref{lem42}.
On the other hand, $\phi$ is ${\mathcal{D}_\sigma}$-pullback
asymptotically compact by Lemma \ref{lem46}. Thus the existence of 
a ${\mathcal{D}_\sigma}$-pullback global attractor for $\phi$ follows
from   Proposition \ref{att}  immediately.
  \end{proof}

\section{Uniform  bounds  of  attractors in $\epsilon$}
\setcounter{equation}{0}

  In this  section, we investigate the limiting behavior of the random
 attractor $\{\mathcal{A}(\tau)\}_{\tau \in \R}$  for problem \eqref{41}-\eqref{43}
 when  the small parameter $\epsilon \to 0$. To indicate the fact
 that the random attractor depends on $\epsilon$, hereafter we  write the random
 attractor as $\{\mathcal{A}^\epsilon (\tau)\}_{\tau \in \R}$
 instead of  $\{\mathcal{A}(\tau)\}_{\tau \in \R}$.
 Note that when $\epsilon =0$, \eqref{42} reduces to ${\frac {dv}{dt}} =0$, and hence
 $v$ is conserved in this case. This shows that the limiting system with $\epsilon =0$
 has no global attractor in $\ltwo \times \ltwo$. Based on this fact, one may guess that
 the random attractor $\{\mathcal{A}^\epsilon (\tau)\}_{\tau \in \R}$
 blows up as $\epsilon \to 0$.
 In this respect, we will show that the limiting behavior of  $\{\mathcal{A}^\epsilon (\tau)\}_{\tau \in \R}$
 heavily depends on the behavior of the external terms $f$ and $g$.
 If $f$ or $g$ is unbounded with respect to time in $\ltwo$, then it is very likely
 that $\{\mathcal{A}^\epsilon (\tau)\}_{\tau \in \R}$ blows up.
 However, if both $f$ and $g$ are bounded,  
  $ \mathcal{A}^\epsilon (\tau) $ are uniformly bounded
 in $\ltwo \times \ltwo$ with  respect to $\epsilon$, that is
 $\{\mathcal{A}^\epsilon (\tau)\}_{\tau \in \R}$ does not blow up in this case.

  It follows from \eqref{p42_10} that for every $\tau \in \R$, there is
  $T=T(\tau)>0$ such that for every  $t\ge $T and
$(u_0, v_0) \in {\mathcal{A}}^\epsilon (\tau -t)$,
$$
\| v(\tau, \tau -t, v_0(\tau -t) ) \|^2
\le e^{-\sigma \tau}
\left (
 {\frac {2\epsilon}{\lambda}} \int_{-\infty}^\tau e^{\sigma \xi} \| f(\xi) \|^2 d \xi
 +
 {\frac {2\epsilon}{\gamma}} \int_{-\infty}^\tau e^{\sigma \xi} \| g(\xi) \|^2 d \xi
\right ),
$$
which implies that, for $\tau =0$, $t \ge T$ and $(u_0, v_0) \in {\mathcal{A}}^\epsilon (-t)$,
\be
\label{limit1}
\| v(0,   -t, v_0(-t) ) \|^2
\le  
 {\frac {2\epsilon}{\lambda}} \int_{-\infty}^0 e^{\sigma \xi} \| f(\xi) \|^2 d \xi
 +
 {\frac {2\epsilon}{\gamma}} \int_{-\infty}^0 e^{\sigma \xi} \| g(\xi) \|^2 d \xi
.
\ee
  Next we illustrate that the right-hand side of 
  \eqref{limit1} is unbounded  as $\epsilon \to 0$
  if $f$ or $g$ is unbounded in $\ltwo$. To this end,
  we take
  \be
\label{fg_unbd}
 f(x,t) = \sqrt{|t|}  \ f_1(x) \quad \mbox{and} \quad 
  g(x,t) = \sqrt{|t|} \  g_1(x), \quad x\in \R^n, \ \ t\in \R,
 \ee
  where $f_1$ and $g_1$ are given in $\ltwo$.
  It is clear that $f$ and $g$ are unbounded in $\ltwo$ as
   $t \to \pm\infty$. In this case, the right-hand side of \eqref{limit1}
   is given by
   $$
   {\frac {2\epsilon}{\lambda}} \int_{-\infty}^0 e^{\sigma \xi} \| f(\xi) \|^2 d \xi
 +
 {\frac {2\epsilon}{\gamma}} \int_{-\infty}^0 e^{\sigma \xi} \| g(\xi) \|^2 d \xi
$$
$$
=
- {\frac {2\epsilon}{\lambda}} \| f_1 \|^2  \int_{-\infty}^0 e^{\sigma \xi} \xi  d \xi
-
 {\frac {2\epsilon}{\gamma}} \| g_1 \|^2  \int_{-\infty}^0 e^{\sigma \xi} \xi d \xi
 $$
\be
\label{limit2}
 ={\frac {2\epsilon}{\sigma^2}} \left ( {\frac {\|f_1\|^2}\lambda} + {\frac {\| g_1\|^2}\gamma}
 \right ) 
={\frac {8 }{\gamma^2 \epsilon}} \left ( {\frac {\|f_1\|^2}\lambda} + {\frac {\| g_1\|^2}\gamma}
 \right ).
\ee
By \eqref{limit1}-\eqref{limit2} we get that, for $t \ge T$ and $(u_0, v_0) \in {\mathcal{A}}^\epsilon (-t)$, 
\be
\label{limit3}
\| v(0, -t, v_0(-t)) \|^2
\le
 {\frac {8 }{\gamma^2 \epsilon}} \left ( {\frac {\|f_1\|^2}\lambda} + {\frac {\| g_1\|^2}\gamma}
 \right ).
\ee
 Note that the invariance of  
  $\{\mathcal{A}^\epsilon (\tau)\}_{\tau \in \R}$
  implies that
  \be
  \label{limit4}
  \phi (t, -t,  {\mathcal{A}}^\epsilon (-t) ) = \mathcal{A}^\epsilon (0), \quad \forall \ t \ge 0.
  \ee
  Let $(\tilde{u}, \tilde{v} )$ be an arbitrary element in
  $\mathcal{A}^\epsilon (0)$  and  $t_n \to \infty$. Then it follows
  from \eqref{limit4} that for every $n \ge 1$, there exists
  $(u_{0,n}, v_{0,n}) \in \mathcal{A}^\epsilon (-t_n)$ such that
  $$
  \phi(t_n, -t_n,  (u_{0,n}, v_{0,n} ) ) = (\tilde{u}, \tilde{v})
 ,
 $$
 which implies that
 \be
 \label{limit5}
 ( u(0, -t_n, u_{0,n} ), v(0, -t_n, v_{0,n} ) )
 = (\tilde{u}, \tilde{v}), \quad \forall \ n \ge 1.
 \ee
 Since $t_n \to \infty$, there is $N>0$ such that
 $t_n \ge T$ for all $n \ge N$, and hence by \eqref{limit3}
 and \eqref{limit5} we have, for all $n \ge N$,
\be
\label{limit6}
 \| \tilde{v} \|^2 = \| v(0, -t_n, v_{0,n} ) \|^2
 \le 
 {\frac {8 }{\gamma^2 \epsilon}} \left ( {\frac {\|f_1\|^2}\lambda} + {\frac {\| g_1\|^2}\gamma}
 \right ) .
\ee
Since  the right-hand side of \eqref{limit6} approaches infinity
as $\epsilon \to 0$ and 
 $(\tilde{u}, \tilde{v} )$ is an arbitrary point in 
$\mathcal{A}^\epsilon (0)$,  we find that
the upper bound for  $v$ components
of  $\mathcal{A}^\epsilon (0)$  becomes
unbounded  as $\epsilon \to 0$. This shows that
it is very likely  that $\mathcal{A}^\epsilon (0)$ 
blows up as $\epsilon \to 0$ for such unbounded
$f$ and $g$ given in \eqref{fg_unbd}.
Now the question is what happens if $f$ and $g$ are
bounded in $\ltwo$.  As we will see later, in this case, we can show that
the right-hand side of \eqref{limit1} is uniformly
bounded in $\epsilon$, and hence the random attractor does not blows  up.
To prove this result,  we   need the uniform estimates of solutions
with respect to $\epsilon$.

In this  section, we  agree  that   $K_i$  $ (i \in \N)$
are any positive constants  which depend  only on the data     $(
\nu, \lambda,  \gamma)$, but  not  on
$\epsilon$; while   $C_i$   $ (i \in \N)$  are any
positive constants which may depend on  the parameters
$\epsilon$,  $ \nu, \lambda $ and  $  \gamma$.

\begin{lem}
\label{lem51}
 Suppose   $f \in L^\infty (\R, \ltwo)$, $g \in L^\infty (\R, H^1(\R^n) )$ and
\eqref{44}-\eqref{45}    hold.
Then for every   $D=\{D(t)\}_{t\in \R} \in {\mathcal{D}}_\sigma$, $\tau \in \R$
and $t\ge 0$, the following holds for all $\xi  \ge \tau -t$,
$$ 
 \| v(\xi, \tau -t, v_0(\tau -t) ) \| ^2 \le    e^{- \sigma  \xi}
 e^{\sigma (\tau -t)}
 \left ( \|u_0(\tau -t) \|^2 + \| v_0 (\tau -t ) \|^2
 \right ) + K,
$$
and
$$
\epsilon   \int_{\tau -t}^\tau
e^{\sigma \xi} \| \nabla u (\xi, \tau -t, u_0(\tau -t) ) \|^2 d\xi
\le
{\frac 1{2\nu}}  e^{\sigma (\tau -t)} \left (
\| u_0(\tau -t) \|^2 + \| v_0(\tau -t) \|^2 \right )
+K e^{\sigma \tau},
$$
where $K$ is a  positive constant depending   on the data     $(
\nu,  \lambda,   \gamma)$,  but not on $\epsilon$ or $\tau$.
\end{lem}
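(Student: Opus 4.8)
The plan is to reuse the differential energy inequalities already established in the proof of Lemma \ref{lem42}, now exploiting the hypothesis that $f$ and $g$ are bounded in time rather than merely integrable against $e^{\sigma \xi}$. Write $y(t) = \epsilon \|u\|^2 + \|v\|^2$. Since $f \in L^\infty(\R, \ltwo)$ and $g \in L^\infty(\R, H^1(\R^n))$, the right-hand sides of \eqref{p42_8} and \eqref{p43_1} are bounded by $\epsilon R$, where $R = {\frac 1\lambda}\|f\|^2_{L^\infty(\R,\ltwo)} + {\frac 1\gamma}\|g\|^2_{L^\infty(\R,\ltwo)}$ is a constant independent of $\epsilon$ and $\tau$. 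The decisive point, and the reason $\epsilon$ disappears from the constants, is that the dissipation rate $\sigma = {\frac 12}\epsilon\gamma$ given by \eqref{sigma} is itself proportional to $\epsilon$; hence whenever the source $\epsilon R$ is integrated against $e^{\sigma \xi}$ and the result divided by $\sigma$, the two factors of $\epsilon$ cancel, leaving $\epsilon R/\sigma = 2R/\gamma$.

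For the first estimate, I would start from \eqref{p43_1}, which reads ${\frac{dy}{dt}} + \sigma y \le \epsilon R$. Multiplying by $e^{\sigma t}$ and integrating from $\tau - t$ to $\xi$ gives $e^{\sigma \xi} y(\xi) \le e^{\sigma(\tau-t)} y(\tau-t) + \epsilon R \int_{\tau-t}^\xi e^{\sigma s}\,ds$. Bounding the last integral by ${\frac 1\sigma} e^{\sigma \xi}$, using the cancellation $\epsilon R/\sigma = 2R/\gamma$, and dividing by $e^{\sigma \xi}$ yields $y(\xi) \le e^{-\sigma \xi} e^{\sigma(\tau-t)} y(\tau-t) + 2R/\gamma$. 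Since $\|v(\xi)\|^2 \le y(\xi)$ and, by \eqref{epsilon0}, $y(\tau-t) \le \|u_0(\tau-t)\|^2 + \|v_0(\tau-t)\|^2$, the claimed inequality follows with $K = 2R/\gamma$.

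For the second estimate, I would return to the sharper inequality \eqref{p42_8}, namely ${\frac{dy}{dt}} + 2\sigma y + 2\epsilon\nu\|\nabla u\|^2 \le \epsilon R$. Multiplying by $e^{\sigma t}$ and rewriting $e^{\sigma t}{\frac{dy}{dt}} = {\frac{d}{dt}}(e^{\sigma t} y) - \sigma e^{\sigma t} y$, I discard the nonnegative term $\sigma e^{\sigma t} y$ and integrate from $\tau - t$ to $\tau$. After dropping the nonnegative boundary contribution $e^{\sigma \tau} y(\tau)$, this leaves $2\epsilon\nu\int_{\tau-t}^\tau e^{\sigma \xi}\|\nabla u\|^2\,d\xi \le e^{\sigma(\tau-t)} y(\tau-t) + \epsilon R \int_{\tau-t}^\tau e^{\sigma \xi}\,d\xi$. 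Bounding the source integral by ${\frac 1\sigma} e^{\sigma \tau}$, invoking the same cancellation, dividing by $2\nu$, and using $\epsilon \le 1$ once more to pass from $y(\tau-t)$ to $\|u_0(\tau-t)\|^2 + \|v_0(\tau-t)\|^2$ gives exactly the asserted bound with $K = R/(\nu\gamma)$.

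The computations are essentially the same as in Lemma \ref{lem42}, and nothing is genuinely hard analytically. The only point demanding attention, and the conceptual content of the lemma, is the bookkeeping of the $\epsilon$-dependence: one must keep the factor $\epsilon$ attached to the source explicit, avoid absorbing it into generic constants, and verify that it is exactly cancelled by the $1/\sigma = 2/(\epsilon\gamma)$ arising from integration. Using the $L^\infty$ bounds (rather than the weighted integrability \eqref{fcond}--\eqref{gcond}) is precisely what produces a source of the clean form $\epsilon R$ with $R$ independent of $\tau$, and hence constants $K$ independent of both $\epsilon$ and $\tau$.
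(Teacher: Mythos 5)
Your proposal is correct and follows essentially the same route as the paper's proof: both start from the energy inequality \eqref{p42_8} with the $L^\infty$ bounds on $f$ and $g$ giving a source of the form $K_1\epsilon$, multiply by $e^{\sigma t}$, integrate from $\tau-t$, and exploit the cancellation $\epsilon/\sigma = 2/\gamma$ together with $\epsilon\le 1$ to obtain constants independent of $\epsilon$ and $\tau$. The only cosmetic difference is that you run the integration twice (once from \eqref{p43_1} for the $v$-estimate, once from \eqref{p42_8} for the gradient term), whereas the paper integrates the combined inequality once over $(\tau-t,\xi)$ and then sets $\xi=\tau$ for the gradient bound.
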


\begin{proof}
 Since $f$ and $g$ are bounded in $\ltwo$ and $H^1(\R^n)$, respectively,
 by \eqref{p42_8} we have
 $$
 {\frac d{dt}} \left (
  \epsilon \| u \|^2 + \| v \|^2
 \right )
 + \sigma \left (
  \epsilon \| u \|^2 + \|  v \|^2
 \right )
 + 2 \epsilon \nu \| \nabla u \|^2 
 \le K_1 \epsilon.
 $$
 Multiplying the above by $e^{\sigma t}$ and then integrating  the resulting inequality
over
 $(\tau -t, \xi)$, we get 
 $$
 e^{\sigma \xi}  \left (
  \epsilon \| u(\xi, \tau -t, u_0(\tau -t) ) \|^2 + \| v(\xi, \tau -t, v_0(\tau -t )) \|^2
 \right )
 $$
 $$
 + 2 \epsilon \nu \int_{\tau -t}^\xi e^{\sigma s}
 \| \nabla u(s, \tau -t, u_0(\tau -t) ) \|^2 ds
 $$
 $$
 \le e^{\sigma (\tau -t) }
 \left (
  \epsilon \| u_0(\tau -t) \|^2 + \|  v_0 (\tau -t )\|^2
 \right )
 + K_1\epsilon \int_{\tau-t}^\xi e^{\sigma s} d s
 $$
 $$
 \le e^{\sigma (\tau -t) }
 \left (
   \| u_0(\tau -t) \|^2 + \|  v_0 (\tau -t )\|^2
 \right )
 + K_1\epsilon \int_{-\infty}^{\xi} e^{\sigma s} d s
 $$
\be
\label{p51_1}
 \le e^{\sigma (\tau -t) }
 \left (
   \| u_0(\tau -t) \|^2 + \|  v_0 (\tau -t )\|^2
 \right )
 + {\frac {K_1\epsilon }{\sigma}}e^{\sigma  \xi} .
\ee
Note that $\sigma ={\frac 12} \epsilon \gamma$. 
 Then it follows from \eqref{p51_1}
that
$$  \| v(\xi, \tau -t, v_0(\tau -t )) \|^2
+ 2 \epsilon \nu e^{-\sigma \xi}  \int_{\tau -t}^\xi e^{\sigma s}
 \| \nabla u(s, \tau -t, u_0(\tau -t) ) \|^2 ds
 $$
\be
\label{p51_2}
 \le  e^{-\sigma \xi} e^{\sigma (\tau -t) }
 \left (
   \| u_0(\tau -t) \|^2 + \|  v_0 (\tau -t )\|^2
 \right )
 +
{\frac {2K_1}{\gamma}}.
\ee
Particularly, if $\xi =\tau$, by \eqref{p51_2} we get that
$$
\epsilon e^{-\sigma \tau} \int_{\tau -t}^\tau
e^{\sigma \xi} \| \nabla u (\xi, \tau -t, u_0(\tau -t) ) \|^2 d\xi
$$
$$
\le
{\frac 1{2\nu}} e^{-\sigma \tau} e^{\sigma (\tau -t)} \left (
\| u_0(\tau -t) \|^2 + \| v_0(\tau -t) \|^2 \right )
+{\frac {K_1}{\nu \gamma}},
$$
 which along with \eqref{p51_2}  completes the proof.
\end{proof}

\begin{lem}
\label{lem52} 
Suppose   $f \in L^\infty (\R, \ltwo)$, $g \in L^\infty (\R, H^1(\R^n) )$ and
\eqref{44}-\eqref{45}    hold.
Then for every   $D=\{D(t)\}_{t\in \R} \in {\mathcal{D}}_\sigma$, $\tau \in \R$
and $t\ge 0$,  we have
$$ 
 \|  u (\tau, \tau -t, u_0(\tau -t) ) \| ^2 
 + 2 \nu e^{-\lambda \tau}
 \int_{\tau -t}^\tau e^{\lambda \xi} \| \nabla u(\xi, \tau -t, u_0(\tau -t) ) \|^2 d \xi
 $$
 $$
\le C   e^{- \sigma  \tau}
 e^{\sigma (\tau -t)}
 \left ( \|u_0(\tau -t) \|^2 + \| v_0 (\tau -t ) \|^2
 \right ) + K,
$$
where $K$ is a  positive constant depending   on the data     $(
\nu,  \lambda,   \gamma)$,  but not on $\epsilon$ or $\tau$;
while $C$    depends   on the data     $(
\nu,  \lambda,   \gamma)$ as well as  $\epsilon$, but not on $\tau$.
\end{lem}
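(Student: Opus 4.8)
The plan is to test the $u$-equation \eqref{41} against $u$ itself in $\ltwo$ --- rather than against $\epsilon u$ as in Lemma \ref{lem42} --- precisely so that the dissipation rate which survives is the $\epsilon$-free number $\lambda$ instead of $\sigma=\frac12\epsilon\gamma$. This is exactly what will force the additive constant $K$ to be independent of $\epsilon$. Taking this inner product gives $\frac12\frac{d}{dt}\|u\|^2+\nu\|\nabla u\|^2+\lambda\|u\|^2+\int h(u)u+\int uv=\int f(t)u$. By the sign condition $h(s)s\ge0$ in \eqref{44} the nonlinear term is nonnegative and may simply be dropped. I would then apply Young's inequality to the two remaining source terms, writing $|\int f u|\le\frac\lambda4\|u\|^2+\frac1\lambda\|f\|^2$ and $|\int uv|\le\frac\lambda4\|u\|^2+\frac1\lambda\|v\|^2$, so that half of the coercive term $\lambda\|u\|^2$ is absorbed on the left. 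The outcome is the differential inequality $\frac{d}{dt}\|u\|^2+2\nu\|\nabla u\|^2+\lambda\|u\|^2\le\frac2\lambda\|f\|^2+\frac2\lambda\|v\|^2$, where the decay exponent is $\lambda$ and every coefficient is $\epsilon$-free.

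Next I would multiply by $e^{\lambda t}$, integrate over $(\tau-t,\tau)$, and multiply through by $e^{-\lambda\tau}$, which produces exactly the left-hand side of the claimed estimate together with the three driving terms $e^{-\lambda t}\|u_0(\tau-t)\|^2$, $\frac2\lambda e^{-\lambda\tau}\int_{\tau-t}^\tau e^{\lambda\xi}\|f(\xi)\|^2\,d\xi$, and $\frac2\lambda e^{-\lambda\tau}\int_{\tau-t}^\tau e^{\lambda\xi}\|v(\xi)\|^2\,d\xi$. The $f$-term is handled by the hypothesis $f\in L^\infty(\R,\ltwo)$: bounding $\|f(\xi)\|^2$ by $\|f\|_{L^\infty(\R,\ltwo)}^2$ and extending the integral to $(-\infty,\tau)$ yields the $\epsilon$-free constant $\frac2{\lambda^2}\|f\|_{L^\infty(\R,\ltwo)}^2$. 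For the coupling term I would insert the pointwise-in-$\xi$ bound supplied by Lemma \ref{lem51}, namely $\|v(\xi)\|^2\le e^{-\sigma\xi}e^{\sigma(\tau-t)}(\|u_0(\tau-t)\|^2+\|v_0(\tau-t)\|^2)+K$, valid for all $\xi\ge\tau-t$, and split the resulting integral into an exponentially-weighted part and a constant part.

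The key structural point in that split is the inequality $\sigma<\lambda$, which holds because \eqref{epsilon0} and \eqref{sigma} force $\sigma=\frac12\epsilon\gamma\le\frac12\lambda$. Hence $\int_{\tau-t}^\tau e^{(\lambda-\sigma)\xi}\,d\xi\le\frac1{\lambda-\sigma}e^{(\lambda-\sigma)\tau}$, and after multiplying by $e^{-\lambda\tau}$ the decaying part collapses to $\frac2{\lambda(\lambda-\sigma)}e^{-\sigma\tau}e^{\sigma(\tau-t)}(\|u_0\|^2+\|v_0\|^2)$; the prefactor depends on $\epsilon$ through $\sigma$ but not on $\tau$, so it may be absorbed into the constant $C$. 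The constant part of the Lemma \ref{lem51} bound contributes only $\frac{2K}{\lambda^2}$, which remains $\epsilon$-free because the $K$ of Lemma \ref{lem51} is. Finally, the initial-data term $e^{-\lambda t}\|u_0\|^2$ is folded into the $C$-term after observing the identity $e^{-\sigma\tau}e^{\sigma(\tau-t)}=e^{-\sigma t}$ together with $e^{-\lambda t}\le e^{-\sigma t}$ for $t\ge0$; collecting terms then gives the stated estimate.

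The one genuine subtlety --- and the step I would watch most carefully --- is the bookkeeping of the $\epsilon$-dependence, since the entire purpose of the lemma is that $K$ stays uniform in $\epsilon$ while $C$ is allowed to degenerate. This succeeds for three reasons that must all be invoked together: testing with $u$ (not $\epsilon u$) decouples the $u$-dissipation rate $\lambda$ from $\epsilon$, so the $f$-contribution is $\epsilon$-free; Lemma \ref{lem51} furnishes an $\epsilon$-free constant for the $v$-contribution; and \eqref{epsilon0} keeps $\lambda-\sigma$ bounded away from zero, so the $\epsilon$-dependent damage is confined to the prefactor of the decaying initial-data term and thus to $C$ alone.
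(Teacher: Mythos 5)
Your proposal is correct and follows essentially the same route as the paper's proof: test \eqref{41} with $u$ (so the dissipation rate is the $\epsilon$-free constant $\lambda$), drop $\int h(u)u\ge 0$, absorb the $f$ and $v$ terms by Young's inequality to get $\frac{d}{dt}\|u\|^2+2\nu\|\nabla u\|^2+\lambda\|u\|^2\le \frac{2}{\lambda}\|v\|^2+\frac{2}{\lambda}\|f\|^2$, integrate against $e^{\lambda t}$, bound the $f$-integral by $f\in L^\infty(\R,L^2(\R^n))$, insert the pointwise bound on $\|v(\xi)\|^2$ from Lemma \ref{lem52} minus one (i.e.\ Lemma \ref{lem51}), and use $\lambda>\sigma$ (from \eqref{epsilon0}) to control $\int_{\tau-t}^{\tau}e^{(\lambda-\sigma)\xi}d\xi$ and to fold $e^{-\lambda t}\|u_0\|^2$ into the $e^{-\sigma t}$ term. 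Your $\epsilon$-bookkeeping (the $K$ from Lemma \ref{lem51} staying $\epsilon$-free, the $\frac{1}{\lambda-\sigma}$ factor going into $C$) matches the paper's argument exactly.
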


\begin{proof}
 Taking the inner product of \eqref{41} with $u$ in $\ltwo$, we
 get that
 \be
 \label{p52_1}
 {\frac 12} {\frac d{dt}} \| u \|^2
 + \nu \| \nabla u \|^2 + \lambda \| u \|^2
 + (h(u), u)  = - (u,v) + (f(t), u).
 \ee
 Note that   the right-hand side of \eqref{p52_1} is bounded  by
 \be
 \label{p52_2}
 \|u\| \| v\| + \|f(t)\| \| u \|
   \le {\frac 12} \lambda \| u \|^2 + {\frac 1\lambda} \| v \|^2
 + {\frac 1\lambda} \| f(t) \|^2 .
 \ee
 By \eqref{p52_1}-\eqref{p52_2} and \eqref{44}, we obtain that,
 \be
 \label{p52_3}
 {\frac d{dt}} \| u \|^2 + 2 \nu \| \nabla u \|^2 + \lambda \| u
 \|^2
 \le {\frac 2\lambda} \| v \|^2 +
 {\frac 2\lambda} \| f(t) \|^2 .
 \ee
 Multiplying \eqref{p52_3} by $e^{\lambda t}$ and then integrating
 the resulting inequality over $(\tau -t, \tau)$ with $t \ge 0$, we obtain
 that
 $$
 \| u(\tau, \tau-t, u_0(\tau -t) ) \|^2
 + 2 \nu  e^{-\lambda \tau} \int_{\tau -t}^\tau e^{\lambda \xi} \| \nabla u(\xi,  \tau -t, u_0(\tau -t) ) \|^2 d \xi
 $$
 \be
 \label{p52_4}
 \le
 e^{-\lambda t} \| u_0 (\tau -t ) \|^2
 +{\frac 2\lambda}e^{-\lambda \tau}
 \int_{\tau -t}^\tau
 e^{\lambda \xi} \| v(\xi, \tau -t, v_0(\tau -t) )\|^2 d\xi
 +{\frac 2\lambda}e^{-\lambda \tau}
 \int_{\tau -t}^\tau
 e^{\lambda \xi} \|  f(\xi ) \|^2 d\xi .
\ee
 Note that $f \in L^\infty(\R, \ltwo)$.  By  \eqref{p52_4}   and
Lemma \ref{lem51} we find
 that
 $$
 \| u(\tau, \tau-t, u_0(\tau -t) ) \|^2
 + 2 \nu  e^{-\lambda \tau} \int_{\tau -t}^\tau e^{\lambda \xi} \| \nabla u(\xi,  \tau -t, u_0(\tau -t) ) \|^2 d \xi
 $$
 $$
 \le 
   e^{-\lambda t} \| u_0 (\tau -t ) \|^2
+ K_1 e^{-\lambda \tau}  \int_{\tau -t}^\tau e^{\lambda \xi} d\xi
$$
$$
 + K_2 e^{-\lambda \tau}  e^{\sigma (\tau -t) }
 \left (
  \| u_0(\tau -t )\|^2 + \| v_0(\tau -t ) \|^2
 \right ) 
 \int_{\tau -t}^\tau e^{(\lambda -\sigma) \xi} d\xi.
 $$
 $$
 \le
  e^{-\lambda t} \| u_0 (\tau -t ) \|^2
  +K_3
  + {\frac {K_2}{\lambda -\sigma}} e^{-\sigma t}
  \left (
  \| u_0(\tau -t )\|^2 + \| v_0(\tau -t ) \|^2
 \right ) .
  $$
  Note that $\lambda >\sigma$.  Then  it  follows 
  from the above that
  $$
 \| u(\tau, \tau-t, u_0(\tau -t) ) \|^2
 + 2 \nu  e^{-\lambda \tau} \int_{\tau -t}^\tau e^{\lambda \xi} \| \nabla u(\xi,  \tau -t, u_0(\tau -t) ) \|^2 d \xi
 $$
 \be
 \label{p52_10}
 \le
 K_3 + (1+ {\frac {K_2} {\lambda -\sigma}} )  e^{-\sigma t}
  \left (
  \| u_0(\tau -t )\|^2 + \| v_0(\tau -t ) \|^2
 \right ) ,
\ee
 which completes the proof.
\end{proof}

\begin{lem}
\label{lem53} 
Suppose   $f \in L^\infty (\R, \ltwo)$, $g \in L^\infty (\R, H^1(\R^n) )$ and
\eqref{44}-\eqref{45}    hold.
Then for every   $D=\{D(t)\}_{t\in \R} \in {\mathcal{D}}_\sigma$, $\tau \in \R$
and $t\ge  1$,  we have
$$ 
  e^{-\lambda \tau}
 \int_{\tau -1}^\tau e^{\lambda \xi} \| \nabla u(\xi, \tau -t, u_0(\tau -t) )  \|^2 d \xi
\le C   e^{- \sigma  \tau}
 e^{\sigma (\tau -t)}
 \left ( \|u_0(\tau -t) \|^2 + \| v_0 (\tau -t ) \|^2
 \right ) + K,
$$
where $K$ is a  positive constant depending   on the data     $(
\nu,  \lambda,   \gamma)$,  but not on $\epsilon$ or $\tau$;
while $C$    depends   on the data     $(
\nu,  \lambda,   \gamma)$ as well as  $\epsilon$, but not on $\tau$.
\end{lem}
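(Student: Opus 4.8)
The plan is to read this estimate off directly from Lemma~\ref{lem52}, whose conclusion already bounds the weighted gradient integral $e^{-\lambda\tau}\int_{\tau-t}^\tau e^{\lambda\xi}\|\nabla u\|^2\,d\xi$ over the \emph{entire} time interval $(\tau-t,\tau)$. The only difference is that the integral appearing in Lemma~\ref{lem53} runs over the shorter interval $(\tau-1,\tau)$, so the whole argument reduces to a restriction of the domain of integration.

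First I would observe that, since $t\ge 1$, we have $\tau-1\ge\tau-t$, whence $(\tau-1,\tau)\subseteq(\tau-t,\tau)$. Because the integrand $e^{\lambda\xi}\|\nabla u(\xi,\tau-t,u_0(\tau-t))\|^2$ is nonnegative, shrinking the domain of integration can only decrease the value, so that
$$
\int_{\tau-1}^\tau e^{\lambda\xi}\|\nabla u\|^2\,d\xi
\le \int_{\tau-t}^\tau e^{\lambda\xi}\|\nabla u\|^2\,d\xi .
$$
Next I would invoke Lemma~\ref{lem52}: discarding the nonnegative term $\|u(\tau,\tau-t,u_0(\tau-t))\|^2$ on its left-hand side leaves
$$
2\nu\,e^{-\lambda\tau}\int_{\tau-t}^\tau e^{\lambda\xi}\|\nabla u\|^2\,d\xi
\le C\,e^{-\sigma\tau}e^{\sigma(\tau-t)}\big(\|u_0(\tau-t)\|^2+\|v_0(\tau-t)\|^2\big)+K .
$$
Multiplying the preceding display by $e^{-\lambda\tau}$, combining the two inequalities, and dividing through by $2\nu$ then yields the asserted bound, with $C/(2\nu)$ and $K/(2\nu)$ in place of $C$ and $K$.

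Since there is no genuine analytic content beyond Lemma~\ref{lem52}, the only thing to watch is the constant bookkeeping. Division by $2\nu$ involves a quantity depending only on $\nu$, so the renamed constants retain exactly the stated dependencies: $K/(2\nu)$ stays independent of both $\epsilon$ and $\tau$, while $C/(2\nu)$ may still depend on $\epsilon$ but not on $\tau$. In short, the hard part—propagating the boundedness of $f$ and $g$ through the energy inequality uniformly in $\epsilon$—was already carried out in Lemmas~\ref{lem51} and~\ref{lem52}; Lemma~\ref{lem53} is simply the gradient integral of Lemma~\ref{lem52} restricted to the unit interval $(\tau-1,\tau)$, recorded separately because that is the form that will be needed later, in the same way the passage from Lemma~\ref{lem43} to Lemma~\ref{lem44} uses a unit-interval gradient bound.
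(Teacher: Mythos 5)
Your proof is correct, and it takes a genuinely different (and shorter) route than the paper's. You obtain Lemma~\ref{lem53} as an immediate corollary of Lemma~\ref{lem52}: since $t\ge 1$ gives $(\tau-1,\tau)\subseteq(\tau-t,\tau)$ and the integrand $e^{\lambda\xi}\|\nabla u(\xi,\tau-t,u_0(\tau-t))\|^2$ is nonnegative, the unit-interval integral is dominated by the full integral that Lemma~\ref{lem52} already controls; discarding the nonnegative term $\|u(\tau,\tau-t,u_0(\tau-t))\|^2$ and dividing by $2\nu$ yields the claim with $C/(2\nu)$ and $K/(2\nu)$, and these do retain the stated dependencies, since $\nu$ is part of the data $(\nu,\lambda,\gamma)$. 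The paper instead reproves the estimate from the differential inequality \eqref{p52_3}: it first drops the gradient term to get \eqref{p53_1}, bounds $\|u(\tau-1,\tau-t,u_0(\tau-t))\|^2$ by repeating the Lemma~\ref{lem52} argument with endpoint $\tau-1$ (this is \eqref{p53_2}), and then multiplies \eqref{p52_3} by $e^{\lambda t}$, integrates over $(\tau-1,\tau)$, and invokes Lemma~\ref{lem51} to control $\int_{\tau-1}^\tau e^{\lambda\xi}\|v(\xi,\tau-t,v_0(\tau-t))\|^2\,d\xi$ together with the boundedness of $f$. The two routes end at the same estimate: the paper's version produces, as an internal by-product, the pointwise bound \eqref{p53_2} at time $\tau-1$ (not used elsewhere, and in any case recoverable from Lemma~\ref{lem52} applied at the shifted time), while your restriction argument avoids redoing the Gronwall-type computation entirely and makes transparent that Lemma~\ref{lem53} carries no analytic content beyond Lemma~\ref{lem52}.
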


\begin{proof}
 By \eqref{p52_3} we find that
\be\label{p53_1}
 {\frac d{dt}} \| u \|^2 +\lambda \| u \|^2
 \le {\frac 2\lambda} \| v \|^2 + {\frac 2\lambda} \| f(t) \|^2.
 \ee
 Using $f\in L^\infty(\R, \ltwo )$ and repeating the proof of
 \eqref{p52_10} we  can get  from \eqref{p53_1} that
 \be
 \label{p53_2}
 \| u(\tau-1, \tau -t, u_0(\tau -t) ) \|^2
 \le C_1  e^{-\sigma \tau} e^{\sigma (\tau -t ) }
 \left (
  \| u_0(\tau-t) \|^2 + \| v_0(\tau -t ) \|^2\right )
  +K_1.
 \ee
 Integrating  \eqref{p52_3} over $(\tau -1, \tau)$, by Lemma \ref{lem51}
 we have
 $$
 e^{\lambda \tau}  \| u(\tau, \tau -t, u_0(\tau -t) ) \|^2
 + 2\nu \int^\tau_{\tau -1} e^{\lambda \xi} \| \nabla u(\xi, \tau -t, u_0(\tau -t ) ) \|^2 d\xi
 $$
 $$
 \le e^{\lambda (\tau -1)} \| u(\tau-1, \tau -t, u_0(\tau -t) ) \|^2
 $$
 $$
 + {\frac 2\lambda}\int_{\tau -1}^\tau e^{\lambda \xi} \| v(\xi, \tau -t, v_0(\tau -t))\|^2 d\xi
 +  \int_{\tau -1}^\tau e^{\lambda \xi} \| f(\xi) \|^2  d\xi
 $$
 $$
 \le e^{\lambda (\tau -1)} \| u(\tau-1, \tau -t, u_0(\tau -t) ) \|^2
 $$
 $$
 + {\frac 2\lambda} e^{\sigma (\tau -t)}
 \left (
  \| u_0(\tau -t ) \|^2 + \| v_0 (\tau -t ) \|^2
 \right ) \int^\tau_{\tau -1} e^{(\lambda -\sigma) \xi} d \xi
 + K_2 \int^\tau_{\tau -1} e^{\lambda \xi} d\xi
 $$
$$
 \le e^{\lambda (\tau -1)} \| u(\tau-1, \tau -t, u_0(\tau -t) ) \|^2
 $$
 $$
 + {\frac 2{\lambda (\lambda-\sigma)}} e^{\lambda\tau  -\sigma t}
 \left (
  \| u_0(\tau -t ) \|^2 + \| v_0 (\tau -t ) \|^2
 \right )  
 + {\frac {K_2}\lambda}  e^{\lambda \tau},
 $$
 which along with  \eqref{p53_2} implies that
 $$
 e^{\lambda \tau}  \| u(\tau, \tau -t, u_0(\tau -t) ) \|^2
 + 2\nu \int^\tau_{\tau -1} e^{\lambda \xi} \| \nabla u(\xi, \tau -t, u_0(\tau -t ) ) \|^2 d\xi
 $$
 $$
 \le  C_2  e^{\lambda\tau  -\sigma t}
 \left (
  \| u_0(\tau -t ) \|^2 + \| v_0 (\tau -t ) \|^2
 \right )  
 +  K_3 e^{\lambda \tau}.
 $$
 Then Lemma \ref{lem53} follows from 
 the above immediately.
\end{proof}

Next,  we derive uniform estimates
 in $\epsilon$   for  the   $u$ components of the solutions of
 problem \eqref{41}-\eqref{43} in
   $H^1(\R^n) $.

\begin{lem}
\label{lem54} 
Suppose   $f \in L^\infty (\R, \ltwo)$, $g \in L^\infty (\R, H^1(\R^n) )$ and
\eqref{44}-\eqref{45}    hold.
Then for every   $D=\{D(t)\}_{t\in \R} \in {\mathcal{D}}_\sigma$, $\tau \in \R$
and $t\ge  1$,  we have
$$ 
\| \nabla u(\tau,  \tau -t, u_0(\tau -t) )  \|^2  
\le C   e^{- \sigma  \tau}
 e^{\sigma (\tau -t)}
 \left ( \|u_0(\tau -t) \|^2 + \| v_0 (\tau -t ) \|^2
 \right ) + K,
$$
where $K$ is a  positive constant depending   on the data     $(
\nu,  \lambda,   \gamma)$,  but not on $\epsilon$ or $\tau$;
while $C$    depends   on the data     $(
\nu,  \lambda,   \gamma)$  and   $\epsilon$, but not on $\tau$.
\end{lem}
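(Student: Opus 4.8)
The plan is to obtain an $H^1$ energy inequality for the $u$-component by testing \eqref{41} against $-\Delta u$, and then to close it by a uniform Gronwall argument carried out with the weight $e^{\lambda t}$ rather than $e^{\sigma t}$. The choice of $\lambda$ is essential: it is the genuine damping rate of the $u$-equation and, unlike $\sigma = \frac12 \epsilon\gamma$, does not degenerate as $\epsilon \to 0$, which is exactly what the $\epsilon$-free additive constant $K$ requires.

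First I would reproduce the computation behind \eqref{p44_1}--\eqref{p44_5}: the inner product of \eqref{41} with $-\Delta u$, together with \eqref{44} to bound $\int h(u)\Delta u = -\int h'(u)|\nabla u|^2 \le C\|\nabla u\|^2$ and the Young estimates on $\int v\Delta u$ and $\int f\Delta u$, yields
$$\frac{d}{dt}\|\nabla u\|^2 + \lambda \|\nabla u\|^2 \le C\|\nabla u\|^2 + \frac{2}{\nu}\|v\|^2 + \frac{2}{\nu}\|f(t)\|^2,$$
where $C$ is the constant from \eqref{44} and is independent of $\epsilon$. Since the term $C\|\nabla u\|^2$ on the right cannot be absorbed, I would run the same uniform Gronwall trick as in Lemma \ref{lem44}: multiply by $e^{\lambda t}$, integrate over $(s,\tau)$ for $\tau-1 \le s \le \tau$, replace the interior integrals by their values over $(\tau-1,\tau)$, and finally integrate in $s$ over $(\tau-1,\tau)$. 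This produces
$$e^{\lambda\tau}\|\nabla u(\tau)\|^2 \le (1+C)\int_{\tau-1}^\tau e^{\lambda\xi}\|\nabla u(\xi)\|^2\, d\xi + \frac{2}{\nu}\int_{\tau-1}^\tau e^{\lambda\xi}\|v(\xi)\|^2\, d\xi + K e^{\lambda\tau}.$$

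Next I would bound the two remaining integrals using the earlier lemmas of this section. The gradient integral is controlled directly by Lemma \ref{lem53}, and the $v$-integral by the pointwise bound of Lemma \ref{lem51}; inserting $\|v(\xi)\|^2 \le e^{-\sigma\xi}e^{\sigma(\tau-t)}(\|u_0\|^2+\|v_0\|^2) + K$ and computing $\int_{\tau-1}^\tau e^{(\lambda-\sigma)\xi}\,d\xi \le \frac{1}{\lambda-\sigma}e^{(\lambda-\sigma)\tau}$ gives terms of the shape $e^{(\lambda-\sigma)\tau}e^{\sigma(\tau-t)}(\cdots)$ plus a multiple of $e^{\lambda\tau}$. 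Dividing through by $e^{\lambda\tau}$ then produces the claimed estimate with decaying factor $e^{-\sigma\tau}e^{\sigma(\tau-t)}$, while the $f$-term contributes only an $\epsilon$-free constant because $f \in L^\infty(\R, \ltwo)$.

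The step requiring real care, and where the whole $\epsilon$-uniformity of $K$ hinges, is the bookkeeping of which constants depend on $\epsilon$. The coefficient $C$ multiplying the initial-data term is permitted to depend on $\epsilon$, as it is inherited from the $C$'s of Lemmas \ref{lem51} and \ref{lem53}; but every additive constant must be kept free of $\epsilon$. The one delicate factor is $\frac{1}{\lambda-\sigma}$ from the $v$-integral: here I would invoke \eqref{sigma} and \eqref{epsilon0}, which force $\sigma = \frac12\epsilon\gamma \le \frac12\lambda$ and hence $\lambda-\sigma \ge \frac12\lambda$, so that $\frac{1}{\lambda-\sigma} \le \frac{2}{\lambda}$ uniformly in $\epsilon$. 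This confines all $\epsilon$-dependence to the coefficient $C$ and delivers the stated separation between $C$ and $K$.
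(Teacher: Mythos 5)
Your proposal is correct and follows essentially the same route as the paper's own proof: the same $e^{\lambda t}$-weighted uniform Gronwall argument applied to the $-\Delta u$ energy inequality, with the interior integrals closed by Lemma \ref{lem53} (gradient term) and Lemma \ref{lem51} ($v$-term). One small bookkeeping remark: the factor $\frac{1}{\lambda-\sigma}$ multiplies only the decaying initial-data term, so it lands in the coefficient $C$ (which is permitted to depend on $\epsilon$) rather than in the additive constant $K$; your uniform bound $\lambda-\sigma \ge \frac{1}{2}\lambda$ is valid but is not actually what the $\epsilon$-independence of $K$ hinges on.
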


\begin{proof}
Note that \eqref{p44_5} implies that
$$
{\frac d{dt}} \| \nabla u \|^2 \le K_1 \| \nabla u \|^2
+ {\frac 2\nu} \| v \|^2 + {\frac 2\nu} \| f(t) \|^2.
$$
Since $f \in L^\infty(\R, \ltwo)$ we get that
 \be
 \label{p54_1}
  {\frac d{dt}} \| \nabla u \|^2 + \lambda \| \nabla u \|^2
 \le (\lambda +  K_1 )  \| \nabla u \|^2
+ {\frac 2\nu} \| v \|^2 + K_2.
\ee
Multiplying \eqref{p54_1} by $e^{\lambda t}$ and then integrating
the resulting inequality over $(s, \tau)$ with
$s \in (\tau -1, \tau)$, we find that for all $t \ge 1$,
$$
e^{\lambda \tau} \| \nabla u(\tau, \tau -t, u_0(\tau -t) ) \|^2
\le e^{\lambda s} \| \nabla u(s, \tau -t, u_0(\tau -t) ) \|^2
$$
$$
+ (\lambda +K_1) \int_s^\tau e^{\lambda \xi} \| \nabla u(\xi, \tau -t, u_0(\tau -t) ) \|^2 d\xi
$$
$$
+ {\frac 2\nu} \int_s^\tau e^{\lambda \xi} \| v(\xi, \tau -t, v_0(\tau -t) ) \|^2 d\xi
+K_2\int_s^\tau e^{\lambda \xi}  d\xi
$$
 $$
 \le e^{\lambda s} \| \nabla u(s, \tau -t, u_0(\tau -t) ) \|^2
+ (\lambda +K_1) \int_{\tau -1}^\tau e^{\lambda \xi} \| \nabla u(\xi, \tau -t, u_0(\tau -t) ) \|^2 d\xi
$$
\be
\label{p54_2}
+ {\frac 2\nu} \int_{\tau -1}^\tau e^{\lambda \xi} \| v(\xi, \tau -t, v_0(\tau -t) ) \|^2 d\xi
+ {\frac {K_2}\lambda}  e^{\lambda \tau}.
\ee
We now integrate \eqref{p54_2} with respect to $s$ on $(\tau -1, \tau)$ to get
$$
e^{\lambda \tau} \| \nabla u(\tau, \tau -t, u_0(\tau -t) ) \|^2
\le  \int_{\tau -1}^\tau e^{\lambda s} \| \nabla u(s, \tau -t, u_0(\tau -t) ) \|^2 ds
$$
$$
 + (\lambda +K_1) \int_{\tau -1}^\tau e^{\lambda \xi} \| \nabla u(\xi, \tau -t, u_0(\tau -t) ) \|^2 d\xi
$$
\be
\label{p54_3}
+ {\frac 2\nu} \int_{\tau -1}^\tau e^{\lambda \xi} \| v(\xi, \tau -t, v_0(\tau -t) ) \|^2 d\xi
+ {\frac {K_2}\lambda} e^{\lambda \tau}.
\ee
By Lemma \ref{lem53}, the first two terms on the right-hand side of
\eqref{p54_3} satisfy
$$
\int_{\tau -1}^\tau e^{\lambda s} \| \nabla u(s, \tau -t, u_0(\tau -t) ) \|^2 ds
 + (\lambda +K_1) \int_{\tau -1}^\tau e^{\lambda \xi} \| \nabla u(\xi, \tau -t, u_0(\tau -t) ) \|^2 d\xi
$$
\be
\label{p54_4}
\le C_1 e^{\lambda \tau -\sigma  t} 
\left (
 \| u_0(\tau -t) \|^2 + \|  v_0(\tau -t ) \|^2
\right )
+ K_3 e^{\lambda \tau}.
\ee
On the other hand,  by Lemma \ref{lem51}, for 
the third term on the right-hand side of
\eqref{p54_3} we have
$$
{\frac 2\nu} \int_{\tau -1}^\tau e^{\lambda \xi} \| v(\xi, \tau -t, v_0(\tau -t) ) \|^2 d\xi
 $$
 $$
 \le 
 {\frac 2\nu}  e^{\sigma (\tau -t)} 
 \left (
 \| u_0(\tau -t) \|^2 + \|  v_0(\tau -t ) \|^2
\right ) \int_{\tau -1} ^\tau e^{\lambda -\sigma) \xi} d\xi
+ {\frac 2\nu} K_4 \int_{\tau -1}^\tau e^{\lambda \xi} d\xi
$$
\be \label{p54_5}
 \le 
 {\frac 2{\nu (\lambda -\sigma)}}  e^{\lambda \tau - \sigma  t} 
 \left (
 \| u_0(\tau -t) \|^2 + \|  v_0(\tau -t ) \|^2
\right )  
+ {\frac {2 K_4}{\nu \lambda}} e^{\lambda \tau}  .
\ee
Then it follows from \eqref{p54_3}-\eqref{p54_5} that 
 $$
e^{\lambda \tau} \| \nabla u(\tau, \tau -t, u_0(\tau -t) ) \|^2
\le   C_2  e^{\lambda \tau - \sigma  t} 
 \left (
 \| u_0(\tau -t) \|^2 + \|  v_0(\tau -t ) \|^2
\right )  
+ K_5  e^{\lambda \tau}  ,
$$
which completes the proof.
 \end{proof}

The following result is concerned with the uniform estimates
in $\epsilon$ for    solutions of problem \eqref{v2}.

\begin{lem}
\label{lem55} 
Suppose   $f \in L^\infty (\R, \ltwo)$, $g \in L^\infty (\R, H^1(\R^n) )$ and
\eqref{44}-\eqref{45}    hold.
Then for every   $D=\{D(t)\}_{t\in \R} \in {\mathcal{D}}_\sigma$, $\tau \in \R$
and $t\ge  1$,  we have
$$ 
\| \nabla v_2(\tau,  \tau -t, 0 )  \|^2  
\le C   e^{- \sigma  \tau}
 e^{\sigma (\tau -t)}
 \left ( \|u_0(\tau -t) \|^2 + \| v_0 (\tau -t ) \|^2
 \right ) + K,
$$
where $K$ is a  positive constant depending   on the data     $(
\nu,  \lambda,   \gamma)$,  but not on $\epsilon$ or $\tau$;
while $C$    depends   on the data     $(
\nu,  \lambda,   \gamma)$  and   $\epsilon$, but not on $\tau$.
\end{lem}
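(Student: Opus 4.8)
The plan is to mimic the $H^1$-estimate for $u$ in Lemma \ref{lem54}, but to integrate against the slow weight $e^{\sigma t}$ rather than $e^{\lambda t}$: the only dissipation available for $\nabla v_2$ in \eqref{pv2_2} is the rate $\sigma = \tfrac12\epsilon\gamma$, so a faster weight is not admissible here. First I would recall the differential inequality \eqref{pv2_2},
$$\frac{d}{dt}\|\nabla v_2\|^2 + \sigma\|\nabla v_2\|^2 \le \frac{4\epsilon}{\gamma}\|\nabla u\|^2 + \frac{4\epsilon}{\gamma}\|\nabla g\|^2,$$
which was established in the proof of Lemma \ref{lemv2} and is equally valid in the present bounded-forcing setting. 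Multiplying by $e^{\sigma t}$, integrating over $(\tau-t,\tau)$, and using the vanishing initial datum $v_2(\tau-t,\tau-t,0)=0$ (so that $\nabla v_2(\tau-t)=0$), I obtain
$$e^{\sigma\tau}\|\nabla v_2(\tau,\tau-t,0)\|^2 \le \frac{4\epsilon}{\gamma}\int_{\tau-t}^\tau e^{\sigma\xi}\|\nabla u(\xi,\tau-t,u_0(\tau-t))\|^2\,d\xi + \frac{4\epsilon}{\gamma}\int_{\tau-t}^\tau e^{\sigma\xi}\|\nabla g(\xi)\|^2\,d\xi.$$

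Next I would dispose of the two terms on the right separately. For the first, the prefactor $\epsilon$ is precisely what makes the second estimate of Lemma \ref{lem51} applicable: that estimate bounds $\epsilon\int_{\tau-t}^\tau e^{\sigma\xi}\|\nabla u\|^2\,d\xi$ by $\tfrac{1}{2\nu}e^{\sigma(\tau-t)}\big(\|u_0(\tau-t)\|^2+\|v_0(\tau-t)\|^2\big) + Ke^{\sigma\tau}$ with $K$ independent of $\epsilon$, so this term supplies exactly the required initial-data piece together with a constant multiple of $e^{\sigma\tau}$. For the second term I would use $g\in L^\infty(\R,H^1(\R^n))$ to extract $\|\nabla g(\xi)\|^2\le\|g\|_{L^\infty(\R,H^1(\R^n))}^2$, so that
$$\frac{4\epsilon}{\gamma}\int_{\tau-t}^\tau e^{\sigma\xi}\|\nabla g(\xi)\|^2\,d\xi \le \frac{4\epsilon}{\gamma\sigma}\,\|g\|_{L^\infty(\R,H^1(\R^n))}^2\,e^{\sigma\tau}.$$
Here lies the crux of the uniformity: by \eqref{sigma} one has $\epsilon/\sigma = 2/\gamma$, so this bound equals $\tfrac{8}{\gamma^2}\|g\|_{L^\infty(\R,H^1(\R^n))}^2\,e^{\sigma\tau}$, a constant independent of $\epsilon$ times $e^{\sigma\tau}$. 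Dividing the whole inequality by $e^{\sigma\tau}$ then yields the claimed estimate, with the additive constant $K$ absorbing $\tfrac{4K}{\gamma}$ and $\tfrac{8}{\gamma^2}\|g\|_{L^\infty}^2$ and the coefficient $C$ absorbing $\tfrac{2}{\nu\gamma}$.

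The only genuine difficulty is to keep every constant multiplying $e^{\sigma\tau}$ free of $\epsilon$-blowup, since a naive Gronwall argument produces the factor $1/\sigma = 2/(\epsilon\gamma)$, which diverges as $\epsilon\to0$. The argument above shows this factor is harmless because it is always paired with an explicit $\epsilon$ coming both from the $\epsilon g$ forcing in \eqref{v2} and from the $\epsilon$-weighted gradient bound in Lemma \ref{lem51}; the cancellation $\epsilon/\sigma = 2/\gamma$ is what drives the whole section. No compactness or tail estimate is needed here, so the proof reduces to this short computation once \eqref{pv2_2} and Lemma \ref{lem51} are in hand.
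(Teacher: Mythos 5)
Your proof is correct and follows essentially the same route as the paper: both start from \eqref{pv2_2}, multiply by $e^{\sigma t}$ and integrate from $\tau-t$ to $\tau$, invoke the second ($\epsilon$-weighted) estimate of Lemma \ref{lem51} for the $\nabla u$ term, and use $\epsilon/\sigma = 2/\gamma$ to keep the forcing contribution uniform in $\epsilon$. The only cosmetic difference is that the paper applies the $L^\infty$ bound on $\|\nabla g\|$ before integrating rather than after, which changes nothing.
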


\begin{proof}
By $g \in L^\infty(\R, H^1(\R^n))$ and  \eqref{pv2_2} we  get that 
\be
\label{p55_1}
{\frac{d}{dt}} \| \nabla v_2 \| ^2 +     \sigma  \| \nabla v_2 \|^2
\le {\frac {4 \epsilon}\gamma} \| \nabla u \|^2
+
\epsilon K_1.
\ee
Multiplying \eqref{p55_1} by $e^{  \sigma t}$ and then integrating 
the resulting inequality over $(\tau - t, \tau)$,  we  obtain  that
 $$
e^{\sigma \tau}  \| \nabla v_2 ( \tau, \tau -t, 0 ) \|^2
\le
{\frac {4 \epsilon}\gamma}  \int_{\tau-t}^{\tau } e^{\sigma \xi} \| \nabla u(\xi, \tau -t, u_0(\tau -t)) \|^2 d\xi
+  \epsilon K_1    \int^\tau_{\tau -t} e^{\sigma \xi} d\xi .
$$
Note that $\sigma ={\frac 12} \epsilon \gamma$. Then by Lemma \ref{lem51} we find that
$$
e^{\sigma \tau}  \| \nabla v_2 ( \tau, \tau -t, 0 ) \|^2
\le C_1    e^{\sigma (\tau -t)}  \left (
\|  u_0 ( \tau -t) \|^2  + \| v_0(\tau -t ) \|^2 \right )
+ K_2      e^{\sigma \tau} ,
$$
which completes the proof.
\end{proof}

As an immediate consequence of  \eqref{v1_ener} and
Lemmas \ref{lem51}, \ref{lem52}, \ref{lem54} and \ref{lem55},
we have the following  uniform  estimates.

\begin{cor}
\label{cor56}
Suppose   $f \in L^\infty (\R, \ltwo)$, $g \in L^\infty (\R, H^1(\R^n) )$ and
\eqref{44}-\eqref{45}    hold.
Then for every   $D=\{D(t)\}_{t\in \R} \in {\mathcal{D}}_\sigma$, $\tau \in \R$
and $t\ge  1$,  we have
$$ 
\|  u(\tau,  \tau -t, u_0(\tau -t) )  \|^2_{H^1}   + \|  v_2(\tau, \tau -t, 0 ) \|^2_{H^1}
\le C   e^{- \sigma  \tau}
 e^{\sigma (\tau -t)}
 \left ( \|u_0(\tau -t) \|^2 + \| v_0 (\tau -t ) \|^2
 \right ) + K,
$$
where $K$ is a  positive constant depending   on the data     $(
\nu,  \lambda,   \gamma)$,  but not on $\epsilon$ or $\tau$;
while $C$    depends   on the data     $(
\nu,  \lambda,   \gamma)$  and   $\epsilon$, but not on $\tau$.
\end{cor}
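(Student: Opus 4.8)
The plan is to obtain the $H^1$ estimate by simply adding together the bounds already furnished by the preceding lemmas, using $\|u\|_{H^1}^2 = \|u\|^2 + \|\nabla u\|^2$ and $\|v_2\|_{H^1}^2 = \|v_2\|^2 + \|\nabla v_2\|^2$. Three of the four pieces are immediate: Lemma~\ref{lem52} bounds $\|u(\tau,\tau-t,u_0(\tau-t))\|^2$, Lemma~\ref{lem54} bounds $\|\nabla u(\tau,\tau-t,u_0(\tau-t))\|^2$, and Lemma~\ref{lem55} bounds $\|\nabla v_2(\tau,\tau-t,0)\|^2$, each already in the precise form $C\,e^{-\sigma\tau}e^{\sigma(\tau-t)}(\|u_0\|^2+\|v_0\|^2)+K$ with $C$ allowed to depend on $\epsilon$ and $K$ independent of $\epsilon$. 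Summing the first two yields $\|u\|_{H^1}^2$ directly.

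The only quantity not supplied by a single lemma is the $L^2$ norm of $v_2$, and recovering it is the one step that is not purely mechanical. First I would use the decomposition $v=v_1+v_2$ from \eqref{v1}--\eqref{v2}, which gives $v_2=v-v_1$ and hence $\|v_2\|^2 \le 2\|v\|^2 + 2\|v_1\|^2$. The term $\|v(\tau,\tau-t,v_0(\tau-t))\|^2$ is controlled by Lemma~\ref{lem51} evaluated at $\xi=\tau$, again of the required form. For $\|v_1\|^2$, formula \eqref{v1_ener} gives $\|v_1(\tau)\|^2 = e^{-2\epsilon\gamma\tau}e^{2\epsilon\gamma(\tau-t)}\|v_0(\tau-t)\|^2$; since $\sigma={\frac 12}\epsilon\gamma$ this equals $e^{-4\sigma t}\|v_0(\tau-t)\|^2$, which for $t\ge 0$ is dominated by $e^{-\sigma t}\|v_0(\tau-t)\|^2 = e^{-\sigma\tau}e^{\sigma(\tau-t)}\|v_0(\tau-t)\|^2$. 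Thus $\|v_1\|^2$ is absorbed into a term of the asserted shape with an $\epsilon$-independent constant, and adding $2\|v\|^2+2\|v_1\|^2$ gives the bound for $\|v_2\|^2$.

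Adding the four contributions then produces the stated estimate. I do not anticipate a genuine difficulty: the exponential rates align because $v_1$ decays strictly faster than the common rate $e^{-\sigma t}$, so its contribution is harmless, and each cited lemma already separates an $\epsilon$-dependent coefficient on the initial-data factor from an $\epsilon$-independent additive constant. The only point requiring care is the bookkeeping of constants, namely verifying that the coefficient multiplying $(\|u_0(\tau-t)\|^2+\|v_0(\tau-t)\|^2)$ stays of the form $C(\epsilon,\nu,\lambda,\gamma)$ while the additive term stays of the form $K(\nu,\lambda,\gamma)$; this is inherited directly from the structure of Lemmas~\ref{lem51}, \ref{lem52}, \ref{lem54} and \ref{lem55}.
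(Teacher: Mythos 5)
Your proposal is correct and follows exactly the route the paper intends: the paper states Corollary~\ref{cor56} as an immediate consequence of \eqref{v1_ener} and Lemmas~\ref{lem51}, \ref{lem52}, \ref{lem54}, \ref{lem55}, and the only non-trivial assembly step --- recovering $\| v_2\|^2$ via $v_2 = v - v_1$, bounding $\|v\|^2$ by Lemma~\ref{lem51} at $\xi=\tau$ and $\|v_1\|^2$ by \eqref{v1_ener} with $e^{-4\sigma t}\le e^{-\sigma t}$ --- is precisely what you carry out. Your bookkeeping of the $\epsilon$-dependent coefficient $C$ versus the $\epsilon$-independent constant $K$ is also consistent with the cited lemmas, so there is no gap.
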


We are now ready  to show that
the union of  the random  attractor 
$\{\mathcal{A}^\epsilon  (\tau) \}_{\tau \in \R}$  is bounded in
$ H^1(\R^n) \times H^1(\R^n)$.

\begin{thm}
\label{thm57} 
 Suppose   $f \in L^\infty (\R, \ltwo)$, $g \in L^\infty (\R, H^1(\R^n) )$ and
\eqref{44}-\eqref{45}    hold.
 Let $\epsilon_0 < \min\{1, \frac{\lambda}{ \gamma}\}$
be a fixed positive number. Then
the set $ \bigcup\limits_{0< \epsilon \le \epsilon_0} 
 \bigcup\limits_{\tau \in \R} 
{\mathcal{A}}^\epsilon(\tau)$
is bounded in $ H^1(\R^n) \times H^1(\R^n)$.
More precisely, there exists a constant $K$,
  depending   only on the data     $(
\nu, \lambda,  \gamma)$  but  not on $\epsilon$,
such that for every $\tau \in \R$, 
 $\epsilon$ with $0< \epsilon \le \epsilon_0$
and $(u^{\epsilon, \tau} , v^{\epsilon, \tau} ) \in \mathcal{A}^\epsilon (\tau)$,
$$ \| u^{\epsilon , \tau}  \|_{H^1(\R^n)} + \| v^{\epsilon , \tau} \|_{H^1(\R^N)}  \le K.
$$
\end{thm}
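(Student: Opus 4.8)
The plan is to fix $\epsilon\in(0,\epsilon_0]$, $\tau\in\R$, and an arbitrary point $(\tilde u,\tilde v)\in\mathcal{A}^\epsilon(\tau)$, and to bound $\|\tilde u\|_{H^1}+\|\tilde v\|_{H^1}$ by the structural constant $K$ of Corollary \ref{cor56}, which that corollary guarantees to be independent of both $\epsilon$ and $\tau$. The genuine obstacle is the $v$-component: equation \eqref{42} provides no smoothing in $v$, so one cannot estimate $\nabla v$ with the initial datum absorbed, and a naive use of the invariance $\phi(t,\tau-t,\mathcal{A}^\epsilon(\tau-t))=\mathcal{A}^\epsilon(\tau)$ would require controlling the $H^1$-norm of the attractor backward in time — exactly the quantity I do not have. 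The decomposition $v=v_1+v_2$ isolates the non-smoothing part $v_1$, and the decisive trick is to reach $(\tilde u,\tilde v)$ not through invariance but through the $\omega$-limit characterization of Proposition \ref{att}, so that the data feeding Corollary \ref{cor56} lie in the absorbing set $B=\{B(t)\}_{t\in\R}\in\mathcal{D}_\sigma$ of Theorem \ref{thm47} rather than in the attractor itself; the uncontrollable term $\nabla v_1=\nabla v_{0,n}$ then never appears, because $v_{1,n}$ enters only through its $\ltwo$-norm, which decays.

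Concretely, since $f\in L^\infty(\R,\ltwo)$ and $g\in L^\infty(\R,H^1(\R^n))$, the absorbing set $B$ of Theorem \ref{thm47} is even uniformly bounded in $\ltwo\times\ltwo$, say $\|B(t)\|\le R$ for all $t$, with $R$ independent of $\tau$. By Proposition \ref{att} there exist $t_n\to\infty$ and $x_n=(u_{0,n},v_{0,n})\in B(\tau-t_n)$ with $\phi(t_n,\tau-t_n,x_n)\to(\tilde u,\tilde v)$ in $\ltwo\times\ltwo$. Write $\phi(t_n,\tau-t_n,x_n)=(u_n,\,v_{1,n}+v_{2,n})$ with $u_n=u(\tau,\tau-t_n,u_{0,n})$, $v_{1,n}=v_1(\tau,\tau-t_n,v_{0,n})$ and $v_{2,n}=v_2(\tau,\tau-t_n,0)$. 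Applying Corollary \ref{cor56} with $D=B$ (valid once $t_n\ge 1$) gives
$$
\|u_n\|_{H^1}^2+\|v_{2,n}\|_{H^1}^2\le C\,e^{-\sigma\tau}e^{\sigma(\tau-t_n)}\|B(\tau-t_n)\|^2+K\le C\,e^{-\sigma\tau}e^{\sigma(\tau-t_n)}R^2+K,
$$
and since $t_n\to\infty$ the first term vanishes (its $\epsilon$-dependent factor $C$ is harmless as it multiplies a quantity tending to $0$), so $\limsup_n\big(\|u_n\|_{H^1}^2+\|v_{2,n}\|_{H^1}^2\big)\le K$ with $K$ depending only on $(\nu,\lambda,\gamma)$. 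Meanwhile \eqref{v1_ener} yields $\|v_{1,n}\|=e^{-\epsilon\gamma t_n}\|v_{0,n}\|\le e^{-\epsilon\gamma t_n}R\to 0$ in $\ltwo$.

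It remains to transfer the uniform $H^1$ bounds to the limit. From $\phi(t_n,\tau-t_n,x_n)\to(\tilde u,\tilde v)$ in $\ltwo\times\ltwo$ together with $v_{1,n}\to 0$ in $\ltwo$ I obtain $u_n\to\tilde u$ and $v_{2,n}\to\tilde v$ in $\ltwo$. Since $\{u_n\}$ and $\{v_{2,n}\}$ are bounded in $H^1(\R^n)$, each admits a weakly convergent subsequence in $H^1(\R^n)$ whose weak limit, by uniqueness of $\ltwo$-limits, must coincide with $\tilde u$ and $\tilde v$ respectively; hence $\tilde u,\tilde v\in H^1(\R^n)$, and by weak lower semicontinuity of the $H^1$-norm,
$$
\|\tilde u\|_{H^1}^2\le\liminf_n\|u_n\|_{H^1}^2\le K,\qquad \|\tilde v\|_{H^1}^2\le\liminf_n\|v_{2,n}\|_{H^1}^2\le K.
$$
Because $K$ depends only on $(\nu,\lambda,\gamma)$ and $(\tilde u,\tilde v)\in\mathcal{A}^\epsilon(\tau)$ was arbitrary, this gives $\|u^{\epsilon,\tau}\|_{H^1}+\|v^{\epsilon,\tau}\|_{H^1}\le 2\sqrt{K}$ uniformly for all $0<\epsilon\le\epsilon_0$ and all $\tau\in\R$, which is the desired conclusion after renaming the constant. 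I expect the only delicate point to be the passage to the limit, where one must be careful that weak $H^1$-limits agree with the strong $\ltwo$-limits and that the constant entering the bound is the $\epsilon$-independent $K$ of Corollary \ref{cor56} rather than the $\epsilon$-dependent $C$.
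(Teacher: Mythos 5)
Your proof is correct, and its core machinery is exactly the paper's: the splitting $v=v_1+v_2$, the $\epsilon$-uniform bound of Corollary \ref{cor56}, decay of the $v_1$-part in $\ltwo$, and identification of a weak $H^1$ limit with a known $\ltwo$ limit plus weak lower semicontinuity. The one genuine difference is how you reach the point $(\tilde u,\tilde v)\in\mathcal{A}^\epsilon(\tau)$: the paper uses invariance, writing $(\tilde u,\tilde v)=\phi(t_n,\tau-t_n,x_n)$ \emph{exactly} with $x_n\in\mathcal{A}^\epsilon(\tau-t_n)$, and kills the initial-data term in \eqref{p57_3} using that $\mathcal{A}^\epsilon\in\mathcal{D}_\sigma$, i.e.\ $e^{\sigma(\tau-t_n)}\|\mathcal{A}^\epsilon(\tau-t_n)\|^2\to 0$; you instead use the $\omega$-limit formula of Proposition \ref{att}, so your $x_n$ lie in the absorbing set $B(\tau-t_n)$, whose uniform $\ltwo$ bound (for bounded $f,g$) kills the same term, at the cost of only having $\phi(t_n,\tau-t_n,x_n)\to(\tilde u,\tilde v)$ strongly in $\ltwo\times\ltwo$ rather than an identity. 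Both routes are sound and of comparable length; yours avoids invoking $\mathcal{A}^\epsilon\in\mathcal{D}_\sigma$, while the paper's avoids the approximation step. One remark: your stated reason for rejecting invariance --- that it ``would require controlling the $H^1$-norm of the attractor backward in time'' --- is not accurate. With the decomposition $v=v_1+v_2$ in hand, the initial data enter the estimates only through their $\ltwo$-norms (in Corollary \ref{cor56} and in \eqref{v1_ener}), and the needed backward $\ltwo$ control is exactly what membership of the attractor in $\mathcal{D}_\sigma$ provides; this is how the paper's proof proceeds.
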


\begin{proof}
For fixed  $\epsilon >0$, denote the solutions of 
\eqref{v1} and \eqref{v2} by
$v_1^\epsilon$ and $v_2^\epsilon$, respectively. 
Then  for every $\tau \in \R$ and $t\ge 0$,
the solution  $(u^\epsilon, v^\epsilon)$
of problem \eqref{41}-\eqref{43} with
initial condition $(u_0, v_0)$ at $\tau -t$ 
can be written as, for all $\xi \ge \tau -t$, 
\be
\label{p57_1}
(u^\epsilon (\xi, \tau -t, u_0), v^\epsilon (\xi, \tau-t, v_0))  =
(u^\epsilon (\xi, \tau -t, u_0),  v_2^\epsilon (\xi, \tau -t, 0) ) + (0, v_1^\epsilon (\xi, \tau -t, v_0 )).
\ee
Take a sequence $\{ t_n \}_{n=1}^\infty$ such that
$t_n \ge 1$ and $t_n \to \infty$. Then given $\tau \in \R$
and $(u^{\epsilon , \tau}, v^{\epsilon , \tau} ) \in {\mathcal{A}}^\epsilon (\tau)$,
by the invariance of the random attractor, we find that
there exists a sequence $\{ (u_0^\epsilon (\tau -t_n), v_0^\epsilon (\tau -t_n) )\}
\in {\mathcal{A}}^\epsilon (\tau -t_n)$ such that
\be
\label{p57_2}
(u^{\epsilon , \tau },  v^{\epsilon , \tau })
= (u^\epsilon (\tau, \tau -t_n, u_0^\epsilon  (\tau -t_n) ), 
v^\epsilon(\tau, \tau-t_n, v_0^\epsilon (\tau -t_n) ) ).
\ee
It follows from \eqref{p57_1}-\eqref{p57_2} that
\be
\label{p57_22}
(u^{\epsilon , \tau },  v^{\epsilon , \tau })
=
 (u^\epsilon (\tau, \tau -t_n, u_0^\epsilon  (\tau -t_n) ), 
v_2^\epsilon(\tau, \tau-t_n,  0 ) )
+ (0, v_1^\epsilon (\tau, \tau -t_n, v_0^\epsilon (\tau -t_n) ) ).
\ee
By Corollary \ref{cor56} we have  for all $n \ge 1$,
$$
\|  u^\epsilon (\tau,  \tau -t_n, u_0^\epsilon (\tau -t_n) )  \|^2_{H^1}   
+ \|  v_2^\epsilon (\tau, \tau -t_n, 0 ) \|^2_{H^1}
$$
\be
\label{p57_3}
\le C   e^{- \sigma  \tau}
 e^{\sigma (\tau -t_n)}
 \left ( \|u_0(\tau -t_n) \|^2 + \| v_0 (\tau -t_n ) \|^2
 \right ) + K,
\ee
where $K$ is a  positive constant depending   on the data     $(
\nu,  \lambda,   \gamma)$,  but not on $\epsilon$ or $\tau$.
Note that the first term on the right-hand side of
\eqref{p57_3} approaches zero as $n \to \infty$, and hence
there exists $N=N(\epsilon, \tau)$ such that for all $n \ge N$,
\be
\label{p57_4}
\|  u^\epsilon (\tau,  \tau -t_n, u_0^\epsilon (\tau -t_n) )  \|^2_{H^1}   
+ \|  v_2^\epsilon (\tau, \tau -t_n, 0 ) \|^2_{H^1}
\le 2 K,
\ee
which implies that there is
$ ( \tilde{u}^{\epsilon, \tau}, \tilde{v}^{\epsilon, \tau} )
\in H^1(\R^n) \times H^1(\R^n)$ such that, up to
a subsequence,
\be
\label{p57_5}
 ( u^\epsilon (\tau,  \tau -t_n, u_0^\epsilon (\tau -t_n) ) , 
  v_2^\epsilon (\tau, \tau -t_n, 0 )   ) \to 
( \tilde{u}^{\epsilon, \tau}, \tilde{v}^{\epsilon, \tau} )
\  \mbox{ weakly in  } H^1(\R^n) \times H^1(\R^n),
\ee
as $n \to \infty$. By \eqref{p57_4}-\eqref{p57_5} we have
\be
\label{p57_6}
\| ( \tilde{u}^{\epsilon, \tau}, \tilde{v}^{\epsilon, \tau} )\|_{H^1 \times H^1}
\le
\liminf_{n\to \infty}
\|  ( u^\epsilon (\tau,  \tau -t_n, u_0^\epsilon (\tau -t_n) ) , 
  v_2^\epsilon (\tau, \tau -t_n, 0 )   )\|_{H^1\times H^1}
  \le \sqrt{2K}.
  \ee
  Since 
 the weak convergence in $ H^1(\R^n) \times H^1(\R^n)$ implies the weak convergence
 in $\ltwo \times \ltwo$, by \eqref{p57_5} we have
 \be
 \label{p57_7}
 ( u^\epsilon (\tau,  \tau -t_n, u_0^\epsilon (\tau -t_n) ) , 
  v_2^\epsilon (\tau, \tau -t_n, 0 )   ) \to 
( \tilde{u}^{\epsilon, \tau}, \tilde{v}^{\epsilon, \tau} )
\  \mbox{ weakly in  } \ltwo \times \ltwo.
\ee
On the other hand, by \eqref{v1_ener} we find that
\be
\label{p57_8}
\| v_1^\epsilon (\tau , \tau - t_n, v_0^\epsilon (\tau -t_n) ) \|
= e^{-\epsilon \gamma \tau}
e^{\epsilon \gamma (\tau - t_n) } \| v_0^\epsilon (\tau -t_n) \|
\to 0.
\ee
Taking the weak limit  of \eqref{p57_22} in $\ltwo \times \ltwo$ as $n \to \infty$,
 by \eqref{p57_7} and \eqref{p57_8} we obtain
 \be
 \label{p57_9}
 (u^{\epsilon, \tau}, v^{\epsilon, \tau} ) =  ( \tilde{u}^{\epsilon, \tau},  \tilde{v}^{\epsilon, \tau} ).
 \ee
 Then it follows from \eqref{p57_6} and \eqref{p57_9} that, for  every $\epsilon>0$,
 $\tau \in \R$ and $(u^{\epsilon, \tau}, v^{\epsilon, \tau} ) \in {\mathcal{A}}^\epsilon (\tau)$,
 $$
 \|  (u^{\epsilon, \tau}, v^{\epsilon, \tau} ) \|_{H^1 \times H^1} \le \sqrt{2K}.
$$
 Note that $K $ is independent of $\epsilon$ and $\tau$, and thus the proof is completed.
  \end{proof}

 \end{document}